\chardef\bslash=`\\ % p. 424, TeXbook
\numberwithin{equation}{section}
\newtheorem{theorem}{Theorem}[section]
\newtheorem{corollary}[theorem]{Corollary}
\newtheorem{lemma}[theorem]{Lemma}
\newtheorem{proposition}[theorem]{Proposition}
\theoremstyle{remark}
\newtheorem{remark}[theorem]{Remark}
\theoremstyle{definition}
\newcommand\bp{\begin{proof}}
\newcommand\ep{\end{proof}}
\newcommand{\Zhat}{\widehat\Z}
\newcommand{\nx}{\mathbb N^{\times}}
\newcommand{\nxnx}{{\mathbb N \rtimes \mathbb N^\times}}
\newcommand{\qxqx}{{\mathbb Q \rtimes \mathbb Q^*_+}}
\newcommand{\N}{\mathbb N}
\newcommand{\Z}{\mathbb Z}
\newcommand{\Q}{\mathbb Q}
\newcommand{\C}{\mathbb C}
\newcommand{\R}{\mathbb R}
\newcommand{\T}{\mathbb T}
\newcommand{\TT}{\mathcal T}
\newcommand{\primes}{\mathcal P}
\newcommand{\SSS}{\mathcal S}
\newcommand\enu[1]{\smallskip\newline\makebox[5mm][l]{\rm(#1)}}
\newcommand\chf{{\mathds 1}}
\newcommand\af{\mathbb{A}_f}
\newcommand\gl{{\operatorname{GL}_2}}
\begin{document}

\title[Type III$_1$ equilibrium states]
{Type III$_1$ equilibrium states of the Toeplitz algebra of the affine semigroup over the natural numbers}
%{Classification of high-temperature KMS states of the Toeplitz algebra of $\nxnx$}

\author[Laca]{Marcelo Laca}
\author[Neshveyev]{Sergey Neshveyev}

\address{Marcelo Laca, Department of Mathematics and Statistics\\
University of Victoria\\
PO Box 3060\\
Victoria, BC V8W 3R4\\
Canada}
\email{laca@math.uvic.ca}

\address{Sergey Neshveyev,  Department of Mathematics\\
University of Oslo\\
PO Box 1053 Blindern\\
N-0316 Oslo\\
Norway}
\email{sergeyn@math.uio.no}

\begin{abstract}
We complete the analysis of KMS-states of
the Toeplitz algebra $ \TT(\nxnx) $ of the affine semigroup  over the natural numbers,
recently studied by Raeburn and the first author, by showing that for every inverse temperature
$\beta$ in the critical interval $1\leq \beta \leq 2$, the unique KMS$_\beta$-state is of type~III$_1$.
We prove this by reducing the type classification from $\TT(\nxnx)$
to that of the symmetric part of the Bost-Connes system, with a shift in inverse temperature.
To carry out this reduction we first obtain a parametrization of the Nica spectrum of $\nxnx$
in terms of an adelic space. Combining a characterization of traces on crossed products due to the second author with an analysis of the action of~$\nxnx$ on the Nica spectrum, we can also recover all the KMS-states of $\TT(\nxnx)$ originally computed by Raeburn and the first author. Our computation sheds light on why there is a free transitive circle action on the extremal KMS$_\beta$-states for $\beta >2$ that does not ostensibly come from an action of $\T$ on the C$^*$-algebra.
\end{abstract}

\date{October 4, 2010}

\thanks{This research has been supported by the Natural Sciences and Engineering Research Council of Canada and by the Research Council of Norway.}

\maketitle

\section*{Introduction}

The Toeplitz algebra $\TT(\nxnx)$ of the semigroup $\nxnx$ was recently studied in
\cite{LR10} with the double motivation of realizing Cuntz's algebra $\mathcal Q_\N$~\cite{Cu} as a boundary quotient and of exploring the structure of KMS-states with respect to a natural dynamics extending that on~$\mathcal Q_\N$. Cuntz showed that $\mathcal Q_\N$ has a unique KMS-state, at inverse temperature $\beta=1$ and, in view of \cite{LN}, it was therefore natural to expect that $\TT(\nxnx)$ would have a KMS$_\beta$-state for every $\beta\ge1$ and no KMS$_\beta$-states for $\beta<1$. This turned out to be true; in fact, as was shown in \cite{LR10}, the system has a phase transition at $\beta=2$: for every $\beta\in[1,2]$ there exists a unique KMS$_\beta$-state, while for $\beta>2$ the simplex of KMS$_\beta$-states is isomorphic to the simplex of probability measures on $\T$. It was also shown in~\cite{LR10} that every extremal KMS$_\beta$-state for $\beta>2$ is of type~I. Our primary goal in this paper is to complete the analysis of KMS-states by showing that the KMS$_\beta$-states for $\beta\in[1,2]$ are of type~III$_1$. Along the way we answer several other natural questions about the C*-algebra $\TT(\nxnx)$ and dynamical systems associated with it, and we obtain a characterization of KMS-states of crossed products in terms of traces that might be of further use elsewhere.

\smallskip

In Section~\ref{s1} we begin by analyzing the Nica spectrum $\Omega$ of $\nxnx$. This is a compact space with an action of $\nxnx$ by injective maps such that $\TT(\nxnx)=C(\Omega)\rtimes(\nxnx)$. A description of this space in terms of supernatural and adic numbers was given in~\cite{LR10}. Starting from that description we show that $\Omega$ can be identified with the disjoint union of $\N\times(\Zhat/\Zhat^*)$ and a quotient of $\Zhat\times(\Zhat/\Zhat^*)$, where $\Zhat=\prod_p\Z_p$ is the compact ring of finite integral adeles, and $\hat \Z^*$ is the group of invertible elements, the integral ideles. The topology on $\Omega$ is obtained by considering $\N\sqcup\Zhat$ as a compactification of the discrete set $\N$ with boundary $\Zhat$. The action of $\nxnx$ on $\Omega$ in this picture is defined using the obvious actions of $\nxnx$ on $\N\times(\Zhat/\Zhat^*)$ and $\Zhat\times(\Zhat/\Zhat^*)$, given by the formula $(m,k)(r,a)=(m+kr,ka)$. This allows us to answer easily various questions about this action arising in the study of $\TT(\nxnx)$, such as which orbits are dense or which points have nontrivial stabilizers. Furthermore, this also allows us to construct an explicit dilation of the action of~$\nxnx$ on $\Omega$ to an action of $\qxqx$ on a locally compact space $\tilde\Omega$, thus realizing $\TT(\nxnx)$ as a full corner in $C_0(\tilde\Omega)\rtimes(\qxqx)$.

\smallskip

In Section~\ref{secphasetrans} we discuss the classification of KMS-states of $\TT(\nxnx)$ obtained in~\cite{LR10} in terms of measures on our parameter space. We first consider states which factor through the conditional expectation onto $C(\Omega)$ and thus are determined by probability measures on $\Omega$. We show that such a measure defines a KMS$_\beta$-state if and only if it is the push-forward of the measure $\nu_1\times\nu$ on $\Zhat\times(\Zhat/\Zhat^*)$, where $\nu_1$ is the Haar measure on~$\Zhat$ and $\nu$ is a measure defining a KMS$_{\beta-1}$-state on the symmetric part $\TT(\N^\times)=C(\Zhat/\Zhat^*)\rtimes\N^\times$ of the Bost-Connes system from~\cite{bos-con}. In particular, for every $\beta\ge1$ there exists a unique such measure $\mu_\beta$ on $\Omega$, and there are no such measures for $\beta<1$. This can of course be deduced from~\cite{LR10}, but we prove it from scratch, demonstrating how the symmetric part of the Bost-Connes system can be used to analyze $\TT(\nxnx)$. It turns out that for $\beta\le2$ every KMS$_\beta$-state on $\TT(\nxnx)$ factors through the conditional expectation onto~$C(\Omega)$, so in this case the classification of KMS-states can be deduced from known facts about the Bost-Connes system.

We then turn to $\beta>2$. In this case it is already known from~\cite{LR10} that there are KMS$_\beta$-states which do not factor through the conditional expectation onto~$C(\Omega)$; specifically, the extreme points of the simplex of KMS$_\beta$-states are indexed by $\T$. The reason for this, from our perspective, is that in this case the measure $\mu_\beta$ is concentrated on a countable set of points with nontrivial stabilizers in~$\qxqx$. By~\cite{nes3} it follows that the state $\mu_{\beta*}$ on~$C(\Omega)$ defined by~$\mu_\beta$ can be extended to a state on $\TT(\nxnx)$ with centralizer containing~$C(\Omega)$ by choosing arbitrary states on the group C$^*$-algebras of the stabilizers. It turns out that all the
relevant stabilizers are isomorphic to~$\Z$ and the corresponding points lie all on the same $(\qxqx)$-orbit. Thus,  to extend $\mu_{\beta*}$ we need a countable collection of probability measures on~$\T$, but since $\qxqx$ acts transitively on our set of points,  to get a KMS-state we can choose an arbitrary probability measure on $\T$ for just one point and then the KMS-condition gives measures for all other points. This gives an explanation, from the point of view of  dynamical systems,  of the existence of a free transitive circle action on the extremal KMS$_\beta$-states observed in~\cite{LR10}. Note that the formal argument in Section~\ref{secphasetrans} is slightly different from the one described here, but the idea is the same.

\smallskip

In Section~\ref{stype} we prove our main result: if $\varphi_\beta$ is the unique KMS$_\beta$-state on $\TT(\nxnx)$ for $\beta\in[1,2]$, then $\pi_{\varphi_\beta}(\TT(\nxnx))''$ is the hyperfinite factor of type III$_1$. For $\beta=1$ this is straightforward, while for $\beta\in(1,2]$ we show that the flow of weights for $\pi_{\varphi_\beta}(\TT(\nxnx))''$ can be identified with that for the von Neumann algebra generated by the symmetric part of the Bost-Connes system in the GNS-representation corresponding to the unique KMS-state at inverse temperature $\beta-1$.

\smallskip

Finally, in Section~\ref{scrossed} we state and prove an auxiliary result, needed in Section~\ref{secphasetrans}, characterizing KMS-states on crossed products $A\rtimes\SSS$ by abelian semigroups in terms of traces on $A$. As should be obvious from the
general context of this section, the results presented here are independent from the rest of the paper.

\medskip

\noindent{\bf Acknowledgement.} The authors are grateful to Iain Raeburn for several conversations at the initial stage of the project.

\bigskip

\section{Crossed product decompositions} \label{s1}

Let $\N\rtimes\N^\times$ be the semidirect product of the additive semigroup $\N = \{0, 1, 2, \cdots\}$ by the multiplicative semigroup $\nx := \N \setminus \{0\}$.
The Toeplitz algebra of $\N\rtimes\N^\times$ is the C$^*$-subalgebra $\TT(\N\rtimes\N^\times)$ of $B(\ell^2(\N\rtimes\N^\times))$ generated by the operators $T_x$ defined by $T_x\delta_y=\delta_{xy}$, for $x,y\in\N\rtimes\N^\times$. Let $s=T_{(1,1)}$ be the isometry corresponding to the additive generator of $\N$ and, for every prime number $p\in\primes$, let $v_p=T_{(0,p)}$. Then $\TT(\N\rtimes\N^\times)$ is generated by the isometries $s$ and $v_p$, $p\in\primes$.

\smallskip

It turns out that the semigroup $\N\rtimes\N^\times$ determines a quasi-lattice order on the group  $\Q\rtimes\Q^*_+$ \cite[Proposition 2.2]{LR10} and thus the Toeplitz algebra is canonically isomorphic to a semigroup crossed product. We  restrict ourselves to a brief account here and we refer to  \cite{LR10} for the details.
The quasi-lattice property implies that the collection of
characteristic functions of the sets $x(\N\rtimes\N^\times)$ for $x\in\N\rtimes\N^\times$ is closed under multiplication and thus their closed linear span is a C*-subalgebra of
$\ell^\infty(\N\rtimes \N^\times)$. According to a general result of Nica~\cite{Ni}, the spectrum $\Omega$ of  this subalgebra is homeomorphic to the space of nonempty hereditary directed subsets of the semigroup. There is an action of~$\N\rtimes\N^\times$ on projections in $\ell^\infty(\N\rtimes\N^\times)$: the image of the characteristic function of a  set $A$ under~$x$ is the characteristic function of the set $xA$.
This induces an action of $\nxnx$ by endomorphisms of~$C(\Omega)$ and also a corresponding
action of $\nxnx$ by injective maps on $\Omega$, so that $x\in\nxnx$ maps $f\in C(\Omega)$ into $f(x^{-1}\cdot)$; here the convention is that $f(x^{-1}\omega)=f(\omega')$ if $\omega=x\omega'$, and $f(x^{-1}\omega)=0$ if $\omega\notin x\Omega$.
Since $\qxqx$ is amenable,  the results of \cite{Ni} and  \cite{LR96}
imply that  $\TT(\N\rtimes\N^\times)$ is canonically isomorphic to the
semigroup crossed product
$$
C(\Omega)\rtimes(\N\rtimes\N^\times),
$$
which, by definition, is the universal unital C$^*$-algebra generated by the image of a unital $*$-homomorphism $\iota\colon C(\Omega)\to C(\Omega)\rtimes(\N\rtimes\N^\times)$ and isometries $v_x$, for $x\in\N\rtimes\N^\times$, such that
$$
v_xv_y=v_{xy}\ \ \hbox{and}\ \ v_x\iota(f)v_x^*=\iota(f(x^{-1}\cdot)).
$$

\smallskip

In \cite[Corollary 5.6]{LR10} a  parametrization of the
space $\Omega$ of hereditary directed subsets of $\nxnx$ is given
in terms of supernatural numbers $N\in\mathcal N$ and $N$-adic numbers $r\in \Z/N: =\varprojlim \big((\Z/n\Z):n\in\nx,\;n|N\big)$.
We will realize the parameter space as a quotient of an adelic space that
carries a natural topology; this will make the  topology of $\Omega$ and the action of
$\nxnx$ more transparent and easy to work with.

Let $\Zhat=\prod_{p\in\primes}\Z_p$ be the compact ring  of finite integral adeles.
We begin by defining a topology on the disjoint union $\bar\N:=\N\sqcup\Zhat$
that turns it into a compactification of $\N$.
Denote by~$\gamma$ the canonical diagonal embedding $\N\hookrightarrow\Zhat$. The base of the topology on $\bar\N$ consists of singletons in~$\N$ and sets of the form $\{n\geq n_0\mid \gamma(n)\in U\}\sqcup U$, where $n_0\in\N$ and $U$ is an open subset of $\Zhat$ in its usual topology. In particular, a sequence converges to a point in $\N$
if and only if it is eventually constant and a sequence in $\Zhat$ converges to a point in $\Zhat$
if and only if it does in the usual topology, but
a sequence $\{m_n\}_n\subset\N\subset\bar\N$ converges to an element $r\in\Zhat\subset\bar\N$ if and only if $m_n\to+\infty$ in the usual sense and $\gamma(m_n)\to r$ in $\Zhat$.

The finite integral ideles $\Zhat^* := \prod_p \Z_p^*$ act by multiplication on $\Zhat$,
and every orbit  has a unique representative $a = (a_p)_{p\in \primes}\in\Zhat$ such that for every $p\in\primes$ we have $a_p=p^n$ for some $n\in\N\cup\{\infty\}$, with the convention that $p^\infty=0$. Therefore the orbit space
$\Zhat/\Zhat^*$ can be thought of as the set~$\mathcal N$ of supernatural numbers. Since  $\Zhat^*$ is a compact group, $\Zhat/\Zhat^*$  is a compact Hausdorff space. This corresponds to viewing the supernatural numbers as the product over all primes $p\in \primes$
of the one-point compactifications $p^\N \sqcup \{0\}$.

For every $a\in\Zhat$ denote by $\bar\N_a$ the quotient of $\bar\N$
obtained by identifying points in $\Zhat$ that are equal modulo $a\Zhat$, in other words,
$\bar\N_a$ is the
disjoint union $\N\sqcup(\Zhat/a\Zhat)$; in particular  $\bar\N$ is $\bar\N_0$. Note that $\Zhat/a\Zhat$ and hence $\bar\N_a$ depend only on the image of $a$ in $\Zhat/\Zhat^*$, and
that for each $a\in\Zhat/\Zhat^*$, the quotient $\Zhat/ a\Zhat$ is naturally isomorphic
%\footnote{We might want to sketch a proof of this claim.}
to the $a$-adic numbers $\Z/a=\varprojlim \big((\Z/n\Z):n\in\nx,\;n|a\big)$ from  \cite[Section 1.2]{LR10}.

To obtain our parametrization of the spectrum, we consider the map $p\colon  \bar\N \times (\Zhat/\Zhat^*) \to \Omega $  defined as follows.
Suppose $(r,a) \in \bar\N \times \Zhat/\Zhat^*$.
If $r\in\N\subset\bar\N$, we say that $(r,a)$ is of type A, and we let
\begin{equation} \label{A-type}
p(r,a) = A(r,a) :=\{(m,k)\in\nxnx\mid a\in k\Zhat/\Zhat^*\ \hbox{and}\ r-m\in k\N\}.
\end{equation}
If $r\in\Zhat\subset\bar \N$, we say that $(r,a)$ is of type~B; in this case we
denote by $r_a$ the class of $r$ in $\Zhat /a\Zhat$ and we let
\begin{equation}\label{B-type}
p(r,a) = B(r_a,a) :=\{(m,k)\in\nxnx\mid a\in k\Zhat/\Zhat^*\ \hbox{and}\ r-m\in k\Zhat\}.
\end{equation}
Note that if $a\in k\Zhat/\Zhat^*$ then $a\Zhat\subset k\Zhat$, so the set $p(r,a)$ indeed depends only on the class of $r$ in~$\Zhat/a\Zhat$.

These are the sets of type A and B appearing in~\cite[Proposition 5.1]{LR10}. As we have already observed, the map $p$ is not injective because in the second case, when $(r,a)$ is of type B,  the image $p(r,a)$ depends on $r$ only through the class $r_a= r+a\Zhat$. Identify points of type B in $\bar\N \times \Zhat/\Zhat^*$
according to the equivalence relation $(r,a) \sim (r',a')$ iff $a = a'$ and $r - r' \in a\Zhat$,
and consider the quotient space
$$ (\bar\N \times \Zhat/\Zhat^*)/_\sim = \{(r,a)\mid a\in \Zhat/\Zhat^*\ \hbox{and}\ r\in \N \sqcup \Zhat /a\Zhat\}.$$

\begin{proposition}
The induced map $\tilde p : (\bar\N \times \Zhat/\Zhat^*)/_\sim  \to \Omega$ is a homeomorphism.
\end{proposition}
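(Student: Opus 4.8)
The plan is to establish three things: (i) $\tilde p$ is a well-defined continuous bijection, (ii) the domain is compact, and (iii) $\Omega$ is Hausdorff; then $\tilde p$ is automatically a homeomorphism. Since $\bar\N\times(\Zhat/\Zhat^*)$ is compact (it is a product of the compactification $\bar\N$ with the compact space $\Zhat/\Zhat^*$) and the equivalence relation $\sim$ is closed, the quotient $(\bar\N\times\Zhat/\Zhat^*)/_\sim$ is compact; and $\Omega$, being the spectrum of a commutative unital C*-algebra, is compact Hausdorff. So the crux is to show that the map $p$ of \eqref{A-type}--\eqref{B-type} lands in $\Omega$, that the induced map $\tilde p$ is injective and surjective onto $\Omega$, and that $p$ (hence $\tilde p$) is continuous.

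First I would check that each $A(r,a)$ and $B(r_a,a)$ is a nonempty hereditary directed subset of $\nxnx$: nonemptiness is clear since $(0,1)$ always qualifies; the hereditary and directed properties follow from the quasi-lattice order on $\Q\rtimes\Q^*_+$ and were essentially recorded in \cite[Proposition 5.1]{LR10}, so I would invoke that. For surjectivity, I would take an arbitrary hereditary directed subset $\omega\subset\nxnx$, extract from it the supernatural number $a\in\Zhat/\Zhat^*$ generated by $\{k : (m,k)\in\omega \text{ for some }m\}$ (this set is directed under divisibility by the directedness of $\omega$, so determines a supernatural number), and then read off the ``additive data'': either the first coordinates $m$ with $(m,k)\in\omega$ stabilize as $k$ grows along the divisibility net — giving a point $r\in\N$ and type A — or they do not, in which case the compatible system of residues $m \bmod k$ converges to an element $r_a\in\varprojlim_{n|a}\Z/n\Z=\Zhat/a\Zhat$, giving type B. Matching this against \cite[Proposition 5.1]{LR10} (where exactly these two families are shown to exhaust $\Omega$) finishes surjectivity. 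Injectivity of $\tilde p$ amounts to: $A(r,a)$ determines $r\in\N$ and $a$ (clear, since $(r,1)\in A(r,a)$ pins down $r$, and the $k$'s appearing pin down $a$); $B(r_a,a)$ determines $a$ and the class $r_a\in\Zhat/a\Zhat$ but nothing more — which is precisely the content of passing to the quotient by $\sim$; and no type-A set equals a type-B set, because along the divisibility net the first coordinates of a type-A set stabilize while those of a type-B set do not. This is the step that requires the most care, and I regard the clean bookkeeping of ``which $(m,k)$ lie in $p(r,a)$, viewed as $k$ ranges over divisors of $a$'' as the main obstacle; everything is elementary but one must be precise about the $\varprojlim$ identification $\Zhat/a\Zhat\cong\Z/a$ recalled above.

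Finally, for continuity of $p\colon\bar\N\times(\Zhat/\Zhat^*)\to\Omega$, I would work directly with the topology on $\Omega\subset\{0,1\}^{\nxnx}$ (pointwise convergence of characteristic functions of the subsets): I must show that for each fixed $(m,k)\in\nxnx$ the condition $(m,k)\in p(r,a)$ is determined by an open-and-closed condition on $(r,a)$. The condition ``$a\in k\Zhat/\Zhat^*$'' is clopen in $a\in\Zhat/\Zhat^*$ (divisibility by the fixed $k$ is a clopen condition on supernatural numbers). Given that, the condition on $r$ is ``$r\equiv m \pmod k$'' interpreted appropriately: for $r\in\Zhat$ this is the clopen condition $r-m\in k\Zhat$, and for $r=n\in\N$ it is $k\mid n-m$; one checks these glue to a clopen subset of $\bar\N$ using the description of the base of the topology on $\bar\N$ (a basic neighbourhood of $r\in\Zhat$ of the form $\{n\ge n_0:\gamma(n)\in U\}\sqcup U$ with $U=m+k\Zhat$ witnesses exactly the set where $(m,k)$ stays in $p(\,\cdot\,,a)$). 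Hence each coordinate function $(r,a)\mapsto\chf_{p(r,a)}(m,k)$ is continuous, so $p$ is continuous, and it factors continuously through $\sim$. Combining (i)--(iii): $\tilde p$ is a continuous bijection from a compact space onto a Hausdorff space, hence a homeomorphism.
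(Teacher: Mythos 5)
Your proposal is correct and follows essentially the same route as the paper: bijectivity is delegated to \cite[Corollary 5.6]{LR10} (you partially re-sketch it via \cite[Proposition 5.1]{LR10}, which is consistent), continuity of $p$ is checked coordinatewise on the indicator functions $\chf_{p(\cdot,\cdot)}(m,k)$ --- you phrase this via clopen conditions on $(r,a)$ where the paper runs the equivalent sequential case analysis --- and the conclusion is drawn from a continuous bijection of a compact space onto the Hausdorff space $\Omega$. The only blemishes are cosmetic: compactness of the quotient needs no closedness of $\sim$, and for $r=n\in\N$ the condition is $n-m\in k\N$ (so also $n\ge m$) rather than just $k\mid n-m$, though this changes the relevant set only by finitely many isolated points and does not affect clopenness.
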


\bp
By \cite[Corollary 5.6]{LR10} the map $\tilde p$ is bijective. To show that it is continuous recall that, by definition, the topology on $\Omega$ is such that $\omega_n\to\omega$ if and only if
$\chf_{{\omega_n}}(x)\to \chf_{{\omega}}(x)$ for every $x\in\N\rtimes\N^\times$.
Assume $(r_n,a_n)\to(r,a)$ in $\bar\N\times(\Zhat/\Zhat^*)$.
We have to show that, for every $(m,k)\in\N\rtimes\N^\times$,
$$
\chf_{p(r_n,a_n)}(m,k)\to \chf_{p(r,a)}(m,k).
$$
If $a\notin k\Zhat/\Zhat^*$ then $a_n\notin k\Zhat/\Zhat^*$ for sufficiently large $n$, because $k\Zhat$ is closed in $\Zhat$. Hence $\chf_{p(r_n,a_n)}(m,k) = \chf_{p(r,a)}(m,k)=0$ for $n$ large enough.

If $a\in k\Zhat/\Zhat^*$, then $a_n\in k\Zhat/\Zhat^*$ for sufficiently large~$n$,
because $k\Zhat$ is open in $\Zhat$, so we may assume that this is true for all $n$. We consider two cases: $r\in\N\subset\bar\N$ and $r\in\Zhat\subset\bar\N$.

\smallskip

{(a)} Assume first that $r\in\N$. Then $r_n=r$ for sufficiently large $n$, so we may assume that $r_n=r$ for all $n$. By definition, $(m,k)\in p(r,a) = A(r,a)$ if and only if $r-m\in k\N$, and the same condition determines whether $(m,k)\in p(r_n,a_n) = A(r_n,a_n)$. Therefore $\chf_{p(r_n,a_n)}(m,k)=\chf_{p(r,a)}(m,k)$.

\smallskip

{(b)} Assume next that $r\in\Zhat$. By decomposing $\{r_n\}_n$ into two subsequences we may assume that either $r_n\in \Zhat$ for all $n$, or $r_n\in\N\subset\bar\N$ for all $n$. Consider separately these two subcases.

\begin{itemize}
\item[(b$_1$)] Suppose $r_n\in\Zhat$ for all $n$. We have $(m,k)\in p(r,a)=B(r_a,a)$ if and only if $r-m\in k\Zhat$, and $(m,k)\in p(r_n,a_n)=B((r_n)_{a_n},a_n)$ if and only if $r_n-m\in k\Zhat$. Since $m+k\Zhat$ is a clopen subset of $\Zhat$ and $r_n\to r$, for $n$ large enough we have $r_n\in m+k\Zhat$ if and only if $r\in m+k\Zhat$, that is, $\chf_{p(r_n,a_n)}(m,k)=\chf_{p(r,a)}(m,k)$.

\smallskip

\item[(b$_2$)]  Finally, suppose $r_n\in\N$ for all $n$. Then $r_n\to\infty$ and $r_n\to r$ in $\Zhat$. As above, $(m,k)\in p(r,a)=B(r_a,a)$ if and only if $r-m\in k\Zhat$, while $(m,k)\in p(r_n,a_n)=A(r_n,a_n)$ if and only if $r_n-m\in k\N$. Since $r_n\to\infty$, for sufficiently large~$n$ the condition $r_n - m\in k\N$ is equivalent to $r_n-m\in k\Z$, which is in turn the same as the condition $r_n-m\in k\Zhat$, because both $r_n$ and $m$ are integers. As in the case (b$_1$) we conclude that $\chf_{p(r_n,a_n)}(m,k) = \chf_{p(r,a)}(m,k)$ for~$n$ large enough.
\end{itemize}
This proves that the map $ p \colon \bar\N \times \Zhat/\Zhat^*  \to \Omega$ is continuous, hence the quotient map $\tilde p$ is a homeomorphism.
\ep

Our parameter space $ (\bar\N \times \Zhat/\Zhat^*)/_\sim = \{(r,a)\mid a\in \Zhat/\Zhat^*\ \hbox{and}\ r\in \bar\N_a\}$ has a natural action of~$\nxnx$ which we describe next.
We have natural actions of $\N$ by translation on itself and
on $\Zhat$, through the canonical embedding; hence we get an action of $\N$ on $\bar\N$ by translation.  It passes to an action of $\N$ on $\bar\N_a$ for each $a\in \Zhat/\Zhat^*$. The image of $r\in\bar\N_a$ under the action of $m\in\N$ will be denoted by $m+r$. Similarly, multiplication by $k\in\N$ on $\Zhat$ induces a homomorphism $\Zhat/a\Zhat\to k\Zhat/ka\Zhat$, which together with multiplication by $k$ on~$\N$  defines a map of $\bar \N_a$ to $\bar\N_{ka}$. Consequently, the image of $r\in\bar\N_a$ under this map will be denoted by $kr\in\bar\N_{ka}$.

\begin{proposition}\label{homeomorphismprop}
The action of $ \N\rtimes\N^\times$ on
the space $(\bar\N \times \Zhat/\Zhat^*)/_\sim =\{(r,a)\mid a\in \Zhat/\Zhat^*\ \hbox{and}\ r\in\bar \N_a\}$,
obtained through the map $\tilde p$ from the action on $\Omega$,  is given by
$$
(m,k)(r,a)=(m+kr,ka)\ \ \hbox{for}\ \ (m,k) \in \nxnx.
$$
\end{proposition}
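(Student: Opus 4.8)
The plan is to transport the question to the combinatorial model of $\Omega$ as the space of hereditary directed subsets of $\nxnx$. Recall from the discussion above that, in this model, the map on $\Omega$ induced by $x=(m,k)\in\nxnx$ sends a hereditary directed set $\omega$ to the hereditary directed set $\theta_{(m,k)}(\omega)$ generated by $(m,k)\omega=\{(m+km',kk'):(m',k')\in\omega\}$; equivalently, $(n,l)\in\theta_{(m,k)}(\omega)$ if and only if there is $(m',k')\in\omega$ with $(n,l)\le(m,k)(m',k')=(m+km',kk')$, where $(n,l)\le(m+km',kk')$ means precisely that $l\mid kk'$ and $(m+km')-n\in l\N$. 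Since $\tilde p$ is a homeomorphism by the preceding proposition, it is enough to prove that $\theta_{(m,k)}(p(r,a))=p(m+kr,ka)$ for every point $(r,a)$ of the parameter space; the right-hand side is meaningful because $m+kr$ was defined above as an element of $\bar\N_{ka}$, and once this identity is established it follows automatically that $(r,a)\mapsto(m+kr,ka)$ is a well-defined injective self-map of the quotient space.

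For the inclusion $\theta_{(m,k)}(p(r,a))\subseteq p(m+kr,ka)$ I would argue as follows. Suppose $(n,l)\le(m+km',kk')$ with $(m',k')\in p(r,a)$. Membership in $p(r,a)$ forces $a\in k'\Zhat/\Zhat^*$, hence $ka\in kk'\Zhat/\Zhat^*$; and since $l\mid kk'$ we have $kk'\Zhat\subseteq l\Zhat$, so $ka\in l\Zhat/\Zhat^*$. Membership also forces $r-m'\in k'\N$ if $(r,a)$ is of type A, or $r-m'\in k'\Zhat$ if it is of type B; either way $k(r-m')\in kk'\Zhat\subseteq l\Zhat$, so adding this to $(m+km')-n\in l\N$ shows that $(m+kr)-n$ lies in $l\N$ when $r\in\N$ and in $l\Zhat$ when $r\in\Zhat$. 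By the definitions of the type-A and type-B sets this is exactly the statement that $(n,l)\in p(m+kr,ka)$.

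The reverse inclusion is the crux. Let $(n,l)\in p(m+kr,ka)$, so $ka\in l\Zhat/\Zhat^*$ and $(m+kr)-n$ lies in $l\N$ (type A) or in $l\Zhat$ (type B). I would construct $(m',k')\in p(r,a)$ with $(n,l)\le(m+km',kk')$ as follows. Let $k'$ be the least $k'\in\nx$ with $l\mid kk'$; the condition $ka\in l\Zhat/\Zhat^*$, which says that $l$ divides $ka$ as supernatural numbers, then guarantees that $k'$ divides $a$ as supernatural numbers, that is, $a\in k'\Zhat/\Zhat^*$. For the first coordinate, in the type A case take $m'=r$: then $r-m'=0\in k'\N$, so $(r,k')\in p(r,a)$, and $(n,l)\le(m+kr,kk')$ holds because $l\mid kk'$ and $(m+kr)-n\in l\N$. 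In the type B case pick $m'\in\N$ with $m'-r\in k'\Zhat$ and $m'$ large enough that $m+km'\ge n$; then $r-m'\in k'\Zhat$, so $(m',k')\in p(r,a)$, and from
$$
(m+km')-n=\big((m+kr)-n\big)+k(m'-r)
$$
we see that $(m+km')-n$ is the sum of an element of $l\Zhat$ (by hypothesis) and an element of $kk'\Zhat\subseteq l\Zhat$, hence lies in $l\Zhat$; being a non-negative integer, it therefore lies in $l\N$, so $(n,l)\le(m+km',kk')$. In either case $(n,l)\in\theta_{(m,k)}(p(r,a))$, and the proof is complete.

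The main obstacle, I expect, is precisely this reverse inclusion, and within it the choice of $k'$: we need $k'$ large enough that $l\mid kk'$ but small enough that $k'$ still divides the supernatural number $a$, and the defining condition $ka\in l\Zhat/\Zhat^*$ of points of $p(m+kr,ka)$ is exactly what makes both requirements compatible. The remaining manipulations are routine divisibility arguments in $\N$ and in $\Zhat$, and the only difference between the two types is that $p(r,a)$ is a bounded subset of $\nxnx$ when $r\in\N$, which forces $m'=r$, and an unbounded one when $r\in\Zhat$, which allows $m'$ to be chosen large.
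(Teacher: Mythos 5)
Your proof is correct, and it takes a noticeably different route from the paper's. Both arguments start from the same description of the point action (the image of $\omega$ under $(m,k)$ is the smallest hereditary directed set containing $(m,k)\omega$) and both note the easy containment of $(m,k)p(r,a)$ in $p(m+kr,ka)$; the divergence is in how the image is then identified. The paper sandwiches the unknown image $\tilde p(s,b)$ between $(m,k)\tilde p(r,a)$ and $\tilde p(m+kr,ka)$ and then recovers the coordinates $(s,b)$ from three invariants of a hereditary directed set (the intersection $\cap\, l\Zhat$ over its elements, and the supremum of first coordinates, which also separates types A and B), leaning on the bijectivity of $\tilde p$. You instead prove the set equality $\theta_{(m,k)}(p(r,a))=p(m+kr,ka)$ outright, the nontrivial direction being handled by an explicit witness $(m',k')$ with $k'=l/\gcd(k,l)$ (whose admissibility, $a\in k'\Zhat/\Zhat^*$, is exactly extracted from $ka\in l\Zhat/\Zhat^*$) and $m'=r$ in type A, or $m'$ a large integer congruent to $r$ modulo $k'\Zhat$ in type B. Your approach is more self-contained---it never needs the coordinate-recovery observations (i)--(iii), and it in fact reproves that the image is $\tilde p(m+kr,ka)$ without invoking injectivity of $\tilde p$ beyond stating the conclusion---at the cost of a hands-on divisibility construction; the paper's approach avoids constructing witnesses for the reverse inclusion by exploiting the invariants and bijectivity already in hand. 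One small elision: your ``equivalently'' (that the hereditary closure of $(m,k)\omega$ is automatically directed, hence equals the generated hereditary directed set) is stated without proof; it is true because left multiplication preserves the order, and in any case your two inclusions make it unnecessary, since $p(m+kr,ka)$ is itself hereditary and directed and is shown to lie inside the hereditary closure.
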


\bp We first make the following observations on how to recover the coordinates of $(r,a)$ from~$\tilde p(r,a)$ (compare with~\cite[Proposition~5.5]{LR10}):

\begin{itemize}
\item[(i)] $a\Zhat=\cap_{(n,l)}\, l\Zhat$, where the intersection is taken over all elements $(n,l)\in \tilde p(r,a)$;

\item[(ii)] if $r\in\N\subset\bar\N_a$ then $\sup\{n\mid (n,l)\in A(r,a) = \tilde p(r,a) \} = r $;

\item[(iii)]  if $r\in\Zhat/a\Zhat \subset\bar\N_a$ then $\sup\{n\mid (n,l)\in  B(r_a,a) =\tilde p(r,a) \}=\infty$.
\end{itemize}

Suppose $(m,k) \in \nxnx$ and $(r,a) \in \bar\N_a \times \Zhat/\Zhat^*$. Recall that, by definition, the image of the hereditary directed set $\tilde p(r,a) \in \Omega$ under the action of
$(m,k) \in\nxnx$ is the smallest directed hereditary  set -- necessarily of the form $\tilde p(s,b)$
because $\tilde p$ is bijective -- containing $\{(m+kn,kl) \mid (n,l) \in \tilde p (r,a)\}$.
%(one needs to take hereditary closures because in general $\{(m+kn,kl) \mid (n,l) \in \tilde p (r,a)\}$ is not %hereditary itself, cf. \cite[Section 2]{Lpur}).
It is easy to see that
$ \tilde p(m+kr,ka)$ contains all the elements $(m+kn,kl)$ with $(n,l)$ in $\tilde p(r,a)$.
This gives the following inclusions:
\begin{equation}\label{inclusions}
\{(m+kn,kl) \mid (n,l) \in \tilde p (r,a)\} \subset \tilde p (s,b)\subset \tilde p(m+kr,ka).
\end{equation}
From the first inclusion and (i) above,  we get
$$
b\Zhat=\bigcap_{(n',l')\in \tilde p(s,b)}l'\Zhat\subset\bigcap_{(n,l)\in \tilde p(r,a)}  k l \Zhat = ka \Zhat.
$$
Similarly, from the second inclusion we get $ka\Zhat\subset b\Zhat$. Hence $b\Zhat=ka\Zhat$, so that $b=ka \in \Zhat/\Zhat^*$.

\smallskip

To determine $s$ we consider two cases. Assume first that $r\in\N\subset\bar\N_a$. Then by (ii)
and the second inclusion,
$$
\sup\{n\mid (n,l)\in \tilde p(s,ka)\}\leq \sup\{n\mid (n,l)\in \tilde p(m+kr,ka)\}=m+kr.
$$
By (iii) we see that $(s,ka)$ cannot be of type B, so $s\in\N\subset\bar \N_{ka}$, and applying (ii) once again we get $s\leq m+kr$. On the other hand, using the first inclusion and (ii) yet another time we get
$$
\sup\{n\mid (n',l')\in \tilde p(s,ka)\} \geq \sup\{m+kn\mid (n,l)\in \tilde p(r,a)\}=m+kr.
$$
Hence $s=m+kr$.

Assume next that $r\in\Zhat/a\Zhat$. Then by (iii)
$$
\sup\{n\mid (n,l)\in \tilde p(s,ka)\}\ge \sup\{m+kn\mid (n,l)\in \tilde p(r,a)\}=\infty.
$$
By (ii) we see that $(s,ka) $ cannot be of type A, so $s\in \Zhat/ka\Zhat$. Let $(n,l)\in \tilde p(r,a)$. Then, since
$$
(m,k)(n,l)=(m+kn,kl)\in \tilde p(s,ka)\subset \tilde p(m+kr,ka),
$$
both elements $s-(m+kn)$ and $m+kr-(m+kn)$ belong to $kl\Zhat/ka\Zhat$, so that $m+kr-s\in kl\Zhat/ka\Zhat$.
Since the intersection of the groups $kl\Zhat$ over all $(n,l)\in \tilde p(r,a)$ is $ka\Zhat$ by (i), we conclude that $s=m+kr$. This proves that the second inclusion in \eqref{inclusions} is an equality and concludes the proof.
\ep

From now on we shall use the homeomorphism $\tilde p$
to identify our parameter space of equivalence classes
$\{(r,a)\mid a\in \Zhat/\Zhat^*\ \hbox{and}\ r\in\bar \N_a\} $ and the spectrum $ \Omega$.
Following~\cite{LR10} we denote by $\Omega_B\subset\Omega$ the set of points of type B, so
$$
\Omega_B := \{(r,a)\mid a\in \Zhat/\Zhat^*\ \hbox{and}\ r\in\Zhat/a\Zhat\}= p(\Zhat\times(\Zhat/\Zhat^*)).
$$
This set was called the additive boundary in~\cite{BaHLR} and was denoted by $\Omega_{\rm add}$. Note also that the multiplicative boundary~\cite{BaHLR} in our description of $\Omega$ is the set $\Omega_{\rm mult}=\{(r,a)\in\Omega\mid a=0\}\cong\bar\N=\N\sqcup \Zhat$.

\begin{proposition} \label{psemiiso}
The action of $\N\rtimes\N^\times$ on $\Omega_B$ extends to an action of $\Z\rtimes\N^\times$.
 The restriction map $C(\Omega)\to C(\Omega_B)$, $f\mapsto f|_{\Omega_B}$, determines an isomorphism
$$
C(\Omega)\rtimes(\N\rtimes\N^\times)/\langle1-ss^*\rangle\cong C(\Omega_B)\rtimes(\Z\rtimes\N^\times),
$$
where $\langle1-ss^*\rangle$ denotes the closed ideal in $\TT(\nxnx$) generated by the range projection of
the isometry $s = T_{(1,1)}$.
\end{proposition}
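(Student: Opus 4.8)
The plan is to prove both assertions at once by comparing universal properties, the only real computation being an identification of the ideal $\langle 1-ss^*\rangle$. First, the action extends: on $\Omega_B=\{(r,a)\mid a\in\Zhat/\Zhat^*,\ r\in\Zhat/a\Zhat\}$ the element $(1,1)\in\nxnx$ acts by $(r,a)\mapsto(1+r,a)$, i.e.\ by translation by $1$ on each compact group $\Zhat/a\Zhat$, hence as a homeomorphism of $\Omega_B$; and the formula $(m,k)(r,a)=(m+kr,ka)$ of \proref{homeomorphismprop} now makes sense for every $(m,k)\in\Z\rtimes\N^\times$, since $r$ ranges over a group. I would check directly that these are injective continuous self-maps of $\Omega_B$ obeying the semidirect-product composition law, those coming from $\Z\times\{1\}$ being homeomorphisms (injectivity reduces to injectivity of multiplication by $k$ on $\Zhat$); this is the asserted action of $\Z\rtimes\N^\times$, and it restricts on $\nxnx$ to the original one.

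Next, the ideal. Since $v_xv_x^*=\iota(\chf_{x\Omega})$, and translation by $1$ is onto each $\Zhat/a\Zhat$ while on the type~A fibres it misses $0\in\N$, we have $(1,1)\Omega=\Omega\setminus E$ with $E=\{(0,a)\mid a\in\Zhat/\Zhat^*\}$, so $1-ss^*=\iota(\chf_E)$, a projection supported on the clopen set $E\subset\Omega\setminus\Omega_B=\N\times(\Zhat/\Zhat^*)$. Conjugating by $v_{(m,1)}$ gives $v_{(m,1)}(1-ss^*)v_{(m,1)}^*=\iota(\chf_{\{m\}\times(\Zhat/\Zhat^*)})\in\langle 1-ss^*\rangle$ for every $m\in\N$; since $\Omega\setminus\Omega_B$ is the disjoint union of the clopen slices $\{m\}\times(\Zhat/\Zhat^*)$, every $h\in C(\Omega)$ vanishing on $\Omega_B$ is the norm limit of $\sum_{m\le N}h\,\chf_{\{m\}\times(\Zhat/\Zhat^*)}$, so $\iota(h)\in\langle 1-ss^*\rangle$. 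Conversely $\chf_E$ itself vanishes on $\Omega_B$. Hence $\langle 1-ss^*\rangle$ is precisely the ideal of $C(\Omega)\rtimes\nxnx$ generated by $\iota(I_0)$, where $I_0=\{h\in C(\Omega)\mid h|_{\Omega_B}=0\}$ is the kernel of the restriction homomorphism $C(\Omega)\to C(\Omega_B)$.

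Now comes the comparison of universal properties. The quotient $C(\Omega)\rtimes\nxnx/\langle 1-ss^*\rangle$ is universal for pairs $(\iota,\{v_x\}_{x\in\nxnx})$ satisfying the crossed-product relations together with the relation that $v_{(1,1)}$ (already an isometry) be unitary. Given such a pair, the preceding paragraph forces $\iota$ to kill $I_0$, so $\iota$ factors through a unital homomorphism $\bar\iota$ of $C(\Omega_B)$; and unitarity of $v_{(1,1)}$ lets us define $W_{(m,k)}:=v_{(1,1)}^{m}v_{(0,k)}$ for $(m,k)\in\Z\rtimes\N^\times$ (negative powers of $v_{(1,1)}$ meaning powers of its adjoint). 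Using the identity $x\Omega\cap\Omega_B=x\Omega_B$ (valid because $\Omega_B$ is forward and backward invariant under $\nxnx$) together with $v_{(0,k)}v_{(1,1)}^{n}=v_{(1,1)}^{kn}v_{(0,k)}$ (immediate from $v_xv_y=v_{xy}$ for $n\ge0$, and from unitarity of $v_{(1,1)}$ for $n<0$), one checks that $(\bar\iota,\{W_x\}_{x\in\Z\rtimes\N^\times})$ obeys exactly the defining relations of $C(\Omega_B)\rtimes(\Z\rtimes\N^\times)$, understood, as in the excerpt, as the universal C$^*$-algebra for that data. Conversely any such pair restricts on $\nxnx$ to a pair of the first kind. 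These passages are mutually inverse on generators, so the two universal C$^*$-algebras are canonically isomorphic, the isomorphism being induced by $f\mapsto f|_{\Omega_B}$.

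The main obstacle is the middle step: pinning down $\langle 1-ss^*\rangle$ as the ideal generated by $I_0$ rather than a proper subideal. This rests on the fact that the $\nxnx$-orbit of $E$ exhausts the type~A part $\Omega\setminus\Omega_B$, and on the topological observation that this part is a countable disjoint union of the clopen compact slices $\{m\}\times(\Zhat/\Zhat^*)$, so that $I_0$ is generated as an ideal of $C(\Omega)$ by the corresponding indicator projections. The remaining verifications are routine bookkeeping, the one conceptual point being that in the quotient $s$ implements an \emph{automorphism} of $C(\Omega_B)$, not merely an injective endomorphism --- which is exactly why the $\nxnx$-action becomes a $\Z\rtimes\N^\times$-action.
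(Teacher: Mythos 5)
Your proposal is correct and follows essentially the same route as the paper's direct argument: both hinge on the image of $s$ being the unitary of $(1,1)$ downstairs and on showing that the map $C(\Omega)\to C(\Omega)\rtimes(\N\rtimes\N^\times)/\langle1-ss^*\rangle$ factors through $C(\Omega_B)$, after which universality/covariance yields mutually inverse homomorphisms. Your identification of $\langle1-ss^*\rangle$ via the slice projections $\chf_{\{m\}\times(\Zhat/\Zhat^*)}$ is the same computation as the paper's use of $s^n(s^n)^*=\chf_{(n,1)\Omega}$ together with $\bigcap_n(n,1)\Omega=\Omega_B$, since $1-s^n(s^n)^*$ is exactly the sum of the first $n$ slice projections.
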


\bp
The first claim follows simply on noting that the transformation $(m,k) (r,a) = (m+kr, ka)$ is defined on $\Omega_B$ for every $(m,k) \in \Z\rtimes \nx$. The  homomorphism
$
C(\Omega)\rtimes(\N\rtimes\N^\times)\to C(\Omega_B)\rtimes(\Z\rtimes\N^\times)
$ exists by universality of crossed products.
The isomorphism claim then holds by the results of \cite{ELQ} because $\Omega_B$ is the spectrum of the extra relation $1-ss^*$,
see  \cite[Proposition 3.4]{BaHLR}. We may also prove this directly as follows.

Since the image of $s$ in $C(\Omega_B)\rtimes(\Z\rtimes\N^\times)$ is the unitary corresponding to the invertible element $(1,1)\in\Z\rtimes\N^\times$, the homomorphism
$$
C(\Omega)\rtimes(\N\rtimes\N^\times)/\langle1-ss^*\rangle\to C(\Omega_B)\rtimes(\Z\rtimes\N^\times).
$$
is surjective. To construct the inverse homomorphism it is enough to show that the homomorphism
\begin{equation} \label{efactor}
C(\Omega)\to C(\Omega)\rtimes(\N\rtimes\N^\times)/\langle1-ss^*\rangle
\end{equation}
factors through $C(\Omega_B)$. Indeed, since the representation of $\N\rtimes\N^\times$ by isometries in
$$C(\Omega)\rtimes(\N\rtimes\N^\times)/\langle1-ss^*\rangle$$ defines a representation of $\Z\rtimes\N^\times$, we then
get a covariant pair of representations of $C(\Omega_B)$ and~$\Z\rtimes\N^\times$ in $C(\Omega)\rtimes(\N\rtimes\N^\times)/\langle1-ss^*\rangle$ which defines the required homomorphism $C(\Omega_B)\rtimes(\Z\rtimes\N^\times)\to C(\Omega)\rtimes(\N\rtimes\N^\times)/\langle1-ss^*\rangle$.

Since $s^n(s^n)^*=\chf_{(n,1)\Omega}$, the homomorphism from \eqref{efactor}  factors through $C((n,1)\Omega)$ for every $n\in\N$. Since $\cap_n(n+\bar\N_a)=\Zhat/a\Zhat\subset\bar\N_a$, we have $\cap_n(n,1)\Omega=\Omega_B$. Hence the homomorphism indeed factors through $C(\Omega_B)$.
\ep

 Next we will dilate the action of the semigroup $\Z\rtimes\N^\times$ on $\Omega_B$
to an action of the group $\Q\rtimes\Q_+^*$. To do this we need a larger space, which we define by
$$
\tilde\Omega_B :=\{(r,a)\mid a\in\af/\Zhat^*\ \hbox{and}\ r\in\af/a\Zhat\},
$$
where $\af$ is the ring of finite adeles, that is,  the restricted product of $\Q_p$ with respect to $\Z_p$ over~$p\in\primes$. The space $\tilde\Omega_B$ is a quotient of $\af\times(\af/\Zhat^*)$ and we give it the quotient topology; it contains $\Omega_B$ as a compact open subset. The action of $\Z\rtimes\N^\times$ on $\Omega_B$ extends to an action of $\Q\rtimes\Q^*_+$ on $\tilde\Omega_B$ defined by the same formula as before:
$$
(m,k)(r,a)=(m+kr,ka) \ \ \hbox{for}\ \ (m,k) \in \qxqx.
$$

Since $\af$ is the union of $n^{-1}\Zhat$ over all $n\in\N^\times$, it is easy to see that $\tilde\Omega_B$ is the union of $x^{-1}\Omega_B$ over all $x\in(0,\N^\times)\subset\Z\rtimes\N^\times$. Therefore the dynamical system $(C_0(\tilde\Omega_B),\Q\rtimes\Q^*_+)$ is a minimal dilation of $(C(\Omega_B),\Z\rtimes\N^\times)$ in the sense of~\cite[Theorem 2.1]{Ldil}, which is unique up to canonical isomorphism.
By~\cite[Theorem 2.4]{Ldil} we get the following result.

\begin{proposition} \label{pdilate}
The inclusions $C(\Omega_B) \hookrightarrow C_0(\tilde \Omega_B)$ and $\Z\rtimes\N^\times \hookrightarrow \qxqx$ induce a canonical isomorphism
$$
C(\Omega_B)\rtimes(\Z\rtimes\N^\times)\cong \chf_{\Omega_B}(C_0(\tilde\Omega_B)\rtimes(\Q\rtimes\Q_+^*))\chf_{\Omega_B},
$$
and the projection $\chf_{\Omega_B}$ is full in $C_0(\tilde\Omega_B)\rtimes(\Q\rtimes\Q_+^*)$.
\end{proposition}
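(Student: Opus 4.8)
The plan is to recognize $(C_0(\tilde\Omega_B),\qxqx,\alpha)$ as \emph{the} minimal automorphic dilation of the endomorphism system $(C(\Omega_B),\Z\rtimes\N^\times)$ and then quote the structure theorems of \cite{Ldil}. First I would pin down the corner picture. Since $\Omega_B$ is a compact open subset of $\tilde\Omega_B$, the function $\chf_{\Omega_B}$ is a projection in $C_0(\tilde\Omega_B)$ and restriction $f\mapsto f|_{\Omega_B}$ identifies $C(\Omega_B)$ with the corner $\chf_{\Omega_B}C_0(\tilde\Omega_B)\chf_{\Omega_B}$. For $(m,k)\in\Z\rtimes\N^\times$ the automorphism $\alpha_{(m,k)}$ of $C_0(\tilde\Omega_B)$ induced by the homeomorphism $(r,a)\mapsto(m+kr,ka)$ satisfies $(m,k)\Omega_B\subseteq\Omega_B$, so its compression $f\mapsto\chf_{\Omega_B}\alpha_{(m,k)}(f)\chf_{\Omega_B}$ to the corner makes sense; I would check that it equals the endomorphism $f\mapsto f((m,k)^{-1}\cdot)$ of $C(\Omega_B)$ used to form the semigroup crossed product in \proref{psemiiso}. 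The point is that the ``extend by zero off the range $(m,k)\Omega_B$'' convention built into that crossed product is exactly what multiplication by $\chf_{\Omega_B}$ produces.

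Next I would establish minimality. As observed just above the statement, $\af=\bigcup_{n\in\nx}n^{-1}\Zhat$ yields $\tilde\Omega_B=\bigcup_{n\in\nx}(0,n)^{-1}\Omega_B$, an increasing exhaustion of $\tilde\Omega_B$ by the translates of the compact open set $\Omega_B$ along the cofinal subset $\{(0,n):n\in\nx\}$ of $\Z\rtimes\N^\times$. Since $\alpha_{(0,n)^{-1}}\big(\chf_{\Omega_B}C_0(\tilde\Omega_B)\chf_{\Omega_B}\big)=C_0\big((0,n)^{-1}\Omega_B\big)$, it follows that $\bigcup_{x\in\Z\rtimes\N^\times}\alpha_{x^{-1}}\big(\chf_{\Omega_B}C_0(\tilde\Omega_B)\chf_{\Omega_B}\big)$ is dense in $C_0(\tilde\Omega_B)$, where $x$ runs over $\Z\rtimes\N^\times$ viewed inside $\qxqx$. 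This is precisely the minimality condition in \cite[Theorem 2.1]{Ldil}. Since $\Z\rtimes\N^\times$ is an Ore semigroup with group of quotients $\qxqx$, that theorem says the minimal dilation is unique up to a canonical isomorphism intertwining the actions and restricting to the given identification on $C(\Omega_B)$, so our system is indeed the minimal dilation.

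With these two points in hand, \cite[Theorem 2.4]{Ldil} applies directly: the inclusions $C(\Omega_B)\hookrightarrow C_0(\tilde\Omega_B)$ and $\Z\rtimes\N^\times\hookrightarrow\qxqx$ induce a canonical isomorphism of $C(\Omega_B)\rtimes(\Z\rtimes\N^\times)$ onto $\chf_{\Omega_B}\big(C_0(\tilde\Omega_B)\rtimes\qxqx\big)\chf_{\Omega_B}$, and $\chf_{\Omega_B}$ is a full projection in $C_0(\tilde\Omega_B)\rtimes\qxqx$; amenability of $\qxqx$, already used in Section~\ref{s1}, makes the full and reduced crossed products agree so that there is no ambiguity in the statement. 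The only genuinely non-routine part of the argument is the first step: verifying that the corner compression of $\alpha_{(m,k)}$ is the defining endomorphism of the semigroup crossed product, and, relatedly, checking that the hypotheses of \cite[Theorems 2.1 and 2.4]{Ldil} — the Ore condition on $\Z\rtimes\N^\times$, the behaviour of the range projections $\chf_{(m,k)\Omega_B}$, and the compact open nature of $\Omega_B$ in $\tilde\Omega_B$ — are all met. Once this bookkeeping is done the conclusion is a direct quotation of \cite{Ldil}.
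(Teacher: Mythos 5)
Your proposal is correct and follows essentially the same route as the paper: the paper's argument is precisely to note that $\tilde\Omega_B=\bigcup_{n\in\nx}(0,n)^{-1}\Omega_B$, so that $(C_0(\tilde\Omega_B),\qxqx)$ is the minimal automorphic dilation of $(C(\Omega_B),\Z\rtimes\N^\times)$ in the sense of \cite[Theorem 2.1]{Ldil}, and then to invoke \cite[Theorem 2.4]{Ldil} for the corner isomorphism and the fullness of $\chf_{\Omega_B}$. Your additional bookkeeping (identifying $C(\Omega_B)$ with the corner $\chf_{\Omega_B}C_0(\tilde\Omega_B)\chf_{\Omega_B}$, matching the compressed automorphisms with the defining endomorphisms, and the Ore condition) is exactly the verification the paper leaves implicit.
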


\begin{corollary}
The representation of $\N\rtimes\N^\times$ by isometries on $\ell^2(\Z\rtimes\N^\times)$ induces an isomorphism
$$
\TT(\N\rtimes\N^\times)/\langle1-ss^*\rangle\cong\TT(\Z\rtimes\N^\times).
$$
\end{corollary}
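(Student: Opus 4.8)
\medskip

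The plan is to chain together the two structural results just obtained. By \proref{psemiiso}, $\TT(\nxnx)/\langle1-ss^*\rangle\cong C(\Omega_B)\rtimes(\Z\rtimes\nx)$, and by \proref{pdilate} the latter is isomorphic to the full corner $\chf_{\Omega_B}\bigl(C_0(\tilde\Omega_B)\rtimes(\qxqx)\bigr)\chf_{\Omega_B}$. So it is enough to exhibit a \emph{faithful} representation of $C_0(\tilde\Omega_B)\rtimes(\qxqx)$ whose compression to that corner is the representation of $\nxnx$ by isometries on $\ell^2(\Z\rtimes\nx)$. Fix the point $\omega_0\in\Omega_B$ with coordinates $(r,a)=(0,1)$, which under $\tilde p$ corresponds to the hereditary directed set $\N\times\{1\}$. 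Using $(m,k)\omega_0=(m,k)$ from \proref{homeomorphismprop} one checks that, for $g=(m,k)\in\qxqx$, the point $g\omega_0\in\tilde\Omega_B$ lies in $\Omega_B$ precisely when $k\in\nx$ and $m\in\Z$, i.e.
$$
\{\,g\in\qxqx : g\omega_0\in\Omega_B\,\}=\Z\rtimes\nx .
$$

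Let $\rho$ be the representation of $C_0(\tilde\Omega_B)\rtimes(\qxqx)$ on $\ell^2(\qxqx)$ induced from evaluation at $\omega_0$, so $\rho(f)\delta_g=f(g\omega_0)\delta_g$ and $\rho(u_h)\delta_g=\delta_{hg}$. By the identity above, $\rho(\chf_{\Omega_B})$ is the projection onto $\ell^2(\Z\rtimes\nx)\subseteq\ell^2(\qxqx)$; tracing generators through the isomorphisms of \proref{psemiiso} and \proref{pdilate}, the compression of $\rho$ to the corner sends (the image of) $f\in C(\Omega_B)$ to the diagonal operator $\delta_g\mapsto f(g\omega_0)\delta_g$ on $\ell^2(\Z\rtimes\nx)$ and (the image of) the generator $v_{(n,k)}$, $(n,k)\in\Z\rtimes\nx$, to the isometry $T_{(n,k)}\colon\delta_g\mapsto\delta_{(n,k)g}$. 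Hence the composite
$$
\TT(\nxnx)\twoheadrightarrow\TT(\nxnx)/\langle1-ss^*\rangle\xrightarrow{\ \cong\ }\chf_{\Omega_B}\bigl(C_0(\tilde\Omega_B)\rtimes(\qxqx)\bigr)\chf_{\Omega_B}\xrightarrow{\ \rho\ }B(\ell^2(\Z\rtimes\nx))
$$
sends $v_x$ to $T_x$ for all $x\in\nxnx$; it is therefore exactly the homomorphism induced by the representation of $\nxnx$ by isometries on $\ell^2(\Z\rtimes\nx)$, its range is $C^*(\{T_x : x\in\nxnx\})=\TT(\Z\rtimes\nx)$ — indeed $T_{(1,1)}$ is unitary on $\ell^2(\Z\rtimes\nx)$, so this $C^*$-algebra also contains $T_{(n,1)}=T_{(1,1)}^n$ for every $n\in\Z$ — and it annihilates $\langle1-ss^*\rangle$. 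Everything thus reduces to the faithfulness of $\rho$.

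This is where amenability of $\qxqx$ enters. The $\qxqx$-orbit of $\omega_0$ is $\{(m,k):m\in\Q,\ k\in\qx\}\subseteq\tilde\Omega_B$, which is dense: the first coordinates are dense because $\Q$ is dense in $\af$, and the second coordinates run through $\af^*/\Zhat^*$, which is dense in $\af/\Zhat^*$ because $\af^*$ is dense in $\af$ (this is contained in the orbit analysis from \secref{s1}); and the stabilizer of $\omega_0$ is trivial. Since $\rho$ is the regular representation of the transformation groupoid $\tilde\Omega_B\rtimes(\qxqx)$ at the unit $\omega_0$, it factors through the reduced crossed product, and by the standard criterion a point with dense orbit and amenable (here trivial) stabilizer gives a faithful representation of $C_0(\tilde\Omega_B)\rtimes_r(\qxqx)$; as $\qxqx$ is amenable this equals the full crossed product. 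A faithful representation restricts to a faithful representation of any corner, so the homomorphism of the corollary is injective, hence an isomorphism. The main obstacle is precisely this injectivity, and the key device for it is the dilation of \proref{pdilate}: it turns a crossed product by the semigroup $\Z\rtimes\nx$ — which is not a group — into a corner of a crossed product by the amenable \emph{group} $\qxqx$, where faithfulness of point representations is available; the rest is the bookkeeping identifying that point representation with the concrete Toeplitz representation.
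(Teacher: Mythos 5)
Your proposal is correct and follows essentially the same route as the paper's own proof: the paper likewise induces a representation of $C_0(\tilde\Omega_B)\rtimes(\qxqx)$ on $\ell^2(\qxqx)$ from evaluation at the very same point ($\omega_1=(0,\Zhat^*)$, the image of $(0,1)\in\Zhat\times\Zhat$), gets faithfulness from density of its orbit together with amenability of $\qxqx$, and identifies the corner $\chf_{\Omega_B}\ell^2(\qxqx)$ with $\ell^2(\Z\rtimes\nx)$ via $\Q\cap\Zhat=\Z$, exactly as you do. One slip: the stabilizer of your $\omega_0$ in $\qxqx$ is not trivial but equals $\Z\times\{1\}$ (\lemref{rstab}); this is harmless here, since faithfulness on the reduced crossed product needs only the dense orbit (amenability of the whole group $\qxqx$ then handles the passage to the full crossed product), but the parenthetical ``here trivial'' should be corrected.
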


\bp
We refer to \cite[Example 3.9]{BaHLR} for a quick proof using the characterization of faithful representations of  the quotient
$\TT(\N\rtimes\N^\times)/\langle1-ss^*\rangle$, which is the {\em additive boundary quotient} discussed there.
We can also use the above realization of $C(\Omega_B)\rtimes(\Z\rtimes\N^\times)$ as a full corner to give the following proof.

Consider the point $\omega_1=(0,\Zhat^*)\in\tilde\Omega_B$, so $\omega_1$ is the image of the point $(0,1)\in\Zhat\times\Zhat$ in~$\Omega_B$. Let $\pi$ be the representation of $C_0(\tilde\Omega_B)\rtimes(\Q\rtimes\Q_+^*)$ on~$\ell^2(\Q\rtimes\Q^*_+)$ induced from the character $C_0(\tilde\Omega_B)\to\C$, $f\mapsto f(\omega_1)$, so
$$
\pi(f)\delta_g=f(g\omega_1)\delta_g,\ \ \pi(u_h)\delta_g=\delta_{hg}.
$$
Since $\Q$ is dense in $\af$, the $(\Q\rtimes\Q^*_+)$-orbit of $\omega_1$ is dense in $\tilde\Omega_B$, so  $\pi |_{C_0(\tilde\Omega_B)}$ is faithful. Since $\qxqx$ is amenable, $\pi$ itself is faithful.
Hence, letting $P=\pi(\chf_{\Omega_B})$, we get a faithful representation of
$$C(\Omega_B)\rtimes(\Z\rtimes\N^\times) =\chf_{\Omega_B}(C_0(\tilde\Omega_B)\rtimes(\Q\rtimes\Q_+^*))\chf_{\Omega_B}$$
on $P\ell^2(\Q\rtimes\Q^*_+)$ such that  $x\mapsto\pi(x)|_{P\ell^2(\Q\rtimes\Q^*_+)}$. Observe next that since $\Q\cap\Zhat=\Z$, the point $g\omega_1$ belongs to~$\Omega_B$ for an element $g\in\Q\rtimes\Q^*_+$ if and only if $g\in\Z\rtimes\N^\times$. Thus $P\ell^2(\Q\rtimes\Q^*_+)=\ell^2(\Z\rtimes\N^\times)$.

To summarize, the representation of $\Z\rtimes\N^\times$ by isometries on $\ell^2(\Z\rtimes\N^\times)$ extends to a faithful representation of $C(\Omega_B)\rtimes(\Z\rtimes\N^\times)$. Since the latter algebra is canonically isomorphic to the algebra $\TT(\N\rtimes\N^\times)/\langle1-ss^*\rangle$ by Proposition~\ref{psemiiso}, we get the result.
\ep

%The algebra $\TT(\N\rtimes\N^\times)$ was described in terms of generators and %relations in~\cite[Theorem~4.1]{LR10}.  As an immediate consequence we  can %retrieve the presentation of $\TT(\Z\rtimes\N^\times)$ obtained in %\cite[Proposition~3.3]{BaHLR}.

%\begin{corollary}
%The Toeplitz algebra $\TT(\Z\rtimes\N^\times)$ is the universal unital %C$^*$-algebra generated by a unitary $u$ and isometries $v_p$, $p\in\primes$, %satisfying the relations
%$$v_pu=u^pv_p,\ \ v_pv_q=v_qv_p,\ \ v_p^*v_q=v_qv_p^*\ \ \hbox{when}\ \ p\ne %q,\ \ v_p^*u^kv_p=0\ \ \hbox{for}\ \ 1\le k<p.$$
%\end{corollary}

The action of $\qxqx$ on $\tilde \Omega_B$ is not free.  Next we characterize
the points having nontrivial stabilizers.
This will be crucial for our discussion of KMS-states
in Section \ref{secphasetrans}.
Recall that in the previous proof we introduced a special point $\omega_1=(0,\Zhat^*)\in\tilde\Omega_B$, which is the image of the point $(0,1)\in\Zhat\times\Zhat$ in $\Omega_B$.

\begin{lemma} \label{rstab}
We have:
\enu{i} if a point  $(r,a)\in\tilde \Omega_B$ has nontrivial stabilizer in $\qxqx$ then either $a_p=0$ for some $p\in\primes$, or $(r,a)$ is on the orbit $(\qxqx ) \omega_1$; the stabilizer of $(m,k)\omega_1$ in $\qxqx$ is $k\Z \times \{1\}$;
\enu{ii} a point  $(r,a)\in\tilde \Omega_B$  has nontrivial stabilizer in $\Z\times\{1\}\subset\qxqx$ if and only if $(r,a)$ is on the orbit $(\qxqx ) \omega_1$; the stabilizer of $(m,k)\omega_1$ in $\Z\times\{1\}$ is $(k\Z\cap\Z)\times\{1\}$.
%$$\{(r,a)\mid a\in\Q^*_+\Zhat^*/\Zhat^*\ \hbox{and}\ r\in\af/a\Zhat\}.$$
\end{lemma}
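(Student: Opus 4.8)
The plan is to work directly with the explicit formula $(m,k)(r,a) = (m+kr, ka)$ for the action of $\qxqx$ on $\tilde\Omega_B$, reducing everything to elementary statements about the finite adeles. First I would observe that if $(m,k)\in\qxqx$ fixes $(r,a)$, then from the second coordinate we get $ka = a$ in $\af/\Zhat^*$, i.e.\ $k\in\Zhat^*$ as an idele acting on $a$. Writing $a = (a_p)_p$ with $a_p \in \Q_p$, the condition $ka_p \in a_p\Z_p^*$ for all $p$ forces, at each prime where $a_p\neq 0$, that the $p$-adic valuation of $k$ is zero; whereas at primes with $a_p = 0$ there is no constraint. So unless some $a_p = 0$, we are forced to have $v_p(k)=0$ for all $p$, i.e.\ $k = 1$ (since $k\in\qx$ is positive and a unit everywhere). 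This handles the ``either $a_p = 0$ for some $p$'' clause: assuming no coordinate of $a$ vanishes, any nontrivial stabilizer element must be of the form $(m,1)$ with $m\neq 0$.

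Next, still assuming all $a_p\neq 0$, I would analyze the fixed-point equation $m + r = r$ in $\af/a\Zhat$, i.e.\ $m \in a\Zhat$ (as a subgroup of $\af$), with $m\in\Q$. Since $a_p\neq 0$ for every $p$ but $a_p$ can be an arbitrary nonzero element of $\Q_p$, the ideal $a\Zhat = \prod_p a_p\Z_p$ is a nonzero (fractional) $\Zhat$-submodule of $\af$; the only rational number lying in $a\Zhat$ for \emph{every} prime simultaneously, with $m\neq 0$ possible, requires $a\Zhat \supseteq m\Zhat$ for some nonzero $m\in\Q$, which by comparing valuations prime-by-prime means $v_p(a_p) \le v_p(m)$ for all $p$; this is only possible if $a_p$ is a unit times a nonnegative power of $p$ for all but finitely many $p$ and in fact $a\Zhat = k'\Zhat$ for some $k'\in\qx$ integral. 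Tracking this through the equivalence relation defining $\tilde\Omega_B$, one sees $(r,a)$ must lie in the $\qxqx$-orbit of a point with $a\in\Zhat^*$-class and $r$ rational, and then a short computation identifies this orbit with $(\qxqx)\omega_1$: indeed $(m,k)\omega_1 = (m, k)$ with $k\in\qx$ acting so that $a$-class is the class of $k$, and $r$-class is $m$ mod $k\Zhat$; normalizing, every such point is $(\qxqx)$-equivalent to $\omega_1 = (0,\Zhat^*)$. Computing the stabilizer of $(m,k)\omega_1$ then reduces, by transporting along $(m,k)$, to computing the stabilizer of $\omega_1$ itself and conjugating: $(n,l)$ fixes $\omega_1 = (0, 1)$ iff $l\in\Zhat^*$ (forcing $l=1$) and $n\in\Zhat$ (forcing $n\in\Z$), so the stabilizer of $\omega_1$ is $\Z\times\{1\}$, and conjugating by $(m,k)$ gives $k\Z\times\{1\}$.

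For part (ii), the ``if'' direction is immediate from (i): the stabilizer of $(m,k)\omega_1$ in $\qxqx$ is $k\Z\times\{1\}\subset\Z\times\{1\}$ precisely when $k\Z\subset\Z$, and in general the stabilizer \emph{within} $\Z\times\{1\}$ is the intersection $(k\Z\cap\Z)\times\{1\}$ (note $k\Z$ for $k\in\qx$ need not be contained in $\Z$, so this intersection genuinely records the constraint). For the ``only if'' direction, suppose $(r,a)$ has some $(n,1)$ with $n\neq 0$ in its stabilizer; then I must show $(r,a)\in(\qxqx)\omega_1$. This does \emph{not} follow from (i) directly, because (i) leaves open the case where some $a_p = 0$. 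The key point here is that $a\in\af/\Zhat^*$, not in the compact quotient $\Zhat/\Zhat^*$: if $a_p = 0$ for some $p$, then after acting by a suitable element of $\qxqx$ (multiplying by a power of $p$) I claim the fixed-point condition $n\in a\Zhat$ with $n$ a \emph{nonzero integer} is incompatible with $a_p = 0$ — because $a_p = 0$ gives $a_p\Z_p = \{0\}$, so $n \in a\Zhat$ forces $n \equiv 0$ in $\Q_p$, i.e.\ $n = 0$, contradiction. Hence $a_p\neq 0$ for all $p$, and we are back in the situation covered by the proof of (i), which then shows $(r,a)\in(\qxqx)\omega_1$; the stabilizer computation is as above.

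\textbf{Main obstacle.} The delicate point is the bookkeeping in the adelic computation identifying exactly when $a\Zhat$ (a product of local fractional ideals) contains a nonzero rational number, and translating ``$(r,a)$ is fixed by some $(m,k)\neq e$'' into membership in the single orbit $(\qxqx)\omega_1$ rather than some larger family — in particular, carefully distinguishing the role of the compact boundary ($\Zhat/\Zhat^*$, where coordinates $a_p=0$ are allowed) from the finite-adele version. Everything else is routine manipulation of the formula $(m,k)(r,a)=(m+kr,ka)$.
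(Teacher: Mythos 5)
Your plan is correct and follows essentially the same route as the paper's proof: the second coordinate forces $k=1$ unless some $a_p=0$; the condition $m\in a\Zhat$ with $m\in\Q$, $m\neq 0$, forces $a$ to have the valuation vector of a nonzero rational; the point then lies on the orbit of $\omega_1$; and the stabilizer of $\omega_1$ is $\Z\times\{1\}$ (from $\Q\cap\Zhat=\Z$ and $\qx\cap\Zhat^*=\{1\}$), conjugated by $(m,k)$ to $k\Z\times\{1\}$. Two small remarks: the step where you pass to a representative with rational $r$-coordinate is exactly where the paper notes that $a\Zhat$ is open and $\Q$ is dense in $\af$, so $\af=\Q+a\Zhat$; and your claim that $a\Zhat=k'\Zhat$ with $k'\in\qx$ \emph{integral} is not needed and is in fact false in general (the stabilizing translation $m$ may be a non-integral rational, e.g.\ for $(0,\tfrac12)\omega_1$), though this does not affect the argument.
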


\begin{proof}
Assume $a_p\ne0$ for all $p\in \primes$. Then $ka\ne a$ for any $k\in\Q^*_+$, $k\ne1$. Hence the stabilizer of~$(r,a)$ is contained in $\Q\times\{1\}$.
It follows that to prove (i) and (ii) it suffices to show that a point~$(r,a)$ (with no restrictions on $a$) has nontrivial stabilizer in $\Q\times\{1\}\subset\qxqx$ if and only if~$(r,a)$ is on the orbit $(\qxqx ) \omega_1$, and the stabilizer of $(m,k)\omega_1$ in $\Q\times\{1\}$ is $k\Z\times\{1\}$.

Assume $(m,1)\ne(0,1)$ stabilizes $(r,a)$. Since $(m,1)(r,a)=(m+r,a)$, this means that $m\in a\Zhat$. It follows that $a$ has the valuation vector of a nonzero rational number,
i.e., all exponents are finite and only finitely many are nonzero, in other words, $a\in\Q^*_+\Zhat^*/\Zhat^*$.
Then $a\Zhat$ is open in $\af$, and since~$\Q$ is dense in $\af$,  we have $\af = \Q + a \Zhat$, so $r\in(\Q+a\Zhat)/a\Zhat$.
Therefore $(r,a)$ lies on the orbit of the
point $\omega_1=(0,\Zhat^*)\in\tilde \Omega_B$.  Since $\Q\cap\Zhat=\Z$, the point $\omega_1$
has stabilizer $\Z \times \{1\}\subset \Q\rtimes\Q^*_+$, and hence the stabilizer of $(m,k)\omega_1$ is $(m,k)(\Z \times \{1\})(m,k)^{-1}=k\Z \times \{1\}$.
\end{proof}

\begin{remark}
Although we will not need it here, we point out that
there is a similar dilation of the action of $\N\rtimes\N^\times$ on the whole space $\Omega$, which
realizes the Toeplitz algebra $\TT(\nxnx)$ as a full  corner in a group crossed product.
Namely, for every~$a\in\af$ we denote by $\bar \Q_a$ the disjoint union $\Q\sqcup(\af/a\Zhat)$; write $\bar\Q$ for $\bar\Q_0$. The space $\bar\Q_a$ depends only on the image of $a$ in $\af/\Zhat^*$, and we define
$$
\tilde\Omega:=\{(r,a)\mid a\in\af/\Zhat^*\ \hbox{and} \ r\in\bar\Q_a\}.
$$
Define a topology on $\bar\Q=\Q\sqcup\af$ similarly to how we defined the topology on $\bar\N=\N\sqcup\Zhat$. The space~$\tilde\Omega$ is a quotient of $\bar\Q\times(\af/\Zhat^*)$, and we give it the quotient topology. Then
$$
\TT(\N\rtimes\N^\times)=C(\Omega)\rtimes(\N\rtimes\N^\times)\cong \chf_{\Omega}(C_0(\tilde\Omega)\rtimes(\Q\rtimes\Q_+^*))\chf_{\Omega}.
$$
\end{remark}

\bigskip

\section{The phase transition on $\TT(\nxnx)$ revisited}\label{secphasetrans}

As in \cite{LR10} we consider the  one-parameter automorphism group $\sigma$ of $\TT(\N\rtimes\N^\times)$ defined by
$$
\sigma_t(T_{(m,k)})=k^{it}T_{(m,k)},
$$
and we recall that a $\sigma$-invariant state $\varphi$ is called a $\sigma$-KMS$_\beta$-state ($\beta\in\R$) if
$$
\varphi(xy)=\varphi(y\sigma_{i\beta}(x))
$$
for any $\sigma$-analytic elements $x,y$. The $\sigma$-KMS$_\beta$-states of $\TT(\N\rtimes\N^\times)$ were classified in~\cite{LR10}. In particular, it was shown that for every $\beta\in[1,2]$ there exists a unique KMS$_\beta$-state $\varphi_\beta$; this implies that each $\varphi_\beta$ is a factor state, but the classification
of their type was left open in \cite{LR10}. Our goal is to relate the measures $\mu_\beta$ defined by $\varphi_\beta$ on $\Omega$ to measures that appeared in the study of the Bost-Connes system~\cite{bos-con}, and then use this, in the next section, to show that $\varphi_\beta$ has type III$_1$. It is possible to do this by adapting the definition of $\mu_\beta$ in~\cite{LR10} to our description of~$\Omega$. Instead, we will use crossed product decompositions from the previous section to construct~$\mu_\beta$ anew.

\smallskip

For any $\sigma$-KMS$_\beta$-state $\varphi$ of $\TT(\nxnx)$ we have $\varphi(ss^*)=\varphi(s^*s)=1$, so $\varphi$ factors through
$$
\TT(\N\rtimes\N^\times)/\langle1-ss^*\rangle\cong C(\Omega_B)\rtimes(\Z\rtimes\N^\times),
$$
and therefore in studying KMS-states of $\TT(\nxnx)$ we can work with the quotient algebra $C(\Omega_B)\rtimes(\Z\rtimes\N^\times)$. We denote the induced dynamics on $C(\Omega_B)\rtimes(\Z\rtimes\N^\times)$ also by $\sigma$. Since
$$
C(\Omega_B)\rtimes(\Z\rtimes\N^\times)=\chf_{\Omega_B}(C_0(\tilde\Omega_B)\rtimes(\Q\rtimes\Q^*_+))\chf_{\Omega_B}
$$
is a groupoid algebra, if $\varphi$ is a $\sigma$-KMS$_\beta$-state on $C(\Omega_B)\rtimes(\Z\rtimes\N^\times)$ and $\mu$ is the probability measure on $\Omega_B$ defined by $\varphi$, then $\mu$ satisfies the scaling condition
\begin{equation}\label{escaling1}
\mu((m,n)Y)=n^{-\beta}\mu(Y)\ \hbox{for any Borel set}\ Y\subset \Omega_B\ \hbox{and any}\ (m,n)\in\Z\rtimes\N^\times,
\end{equation}
see e.g.~the proof of~\cite[Proposition II.5.4]{ren}. Conversely, given such a measure $\mu$ we obtain a $\sigma$-KMS$_\beta$-state by composing the state $\mu_*$ on $C(\Omega_B)$ defined by $\mu$ with the canonical conditional expectation
$C(\Omega_B)\rtimes(\Z\rtimes\N^\times)\to C(\Omega_B)$.

\begin{proposition} \label{psmallbeta}
For every $\beta\ge1$ there exists a unique probability measure $\mu_\beta$ on $\Omega_B$ satisfying the scaling condition \eqref{escaling1}. There are no such measures for $\beta<1$.

Equivalently, for every $\beta\ge1$ there exists a unique $\sigma$-KMS$_\beta$-state on $C(\Omega_B)\rtimes(\Z\rtimes\N^\times)$ which factors through the conditional expectation onto $C(\Omega_B)$. There are no $\sigma$-KMS$_\beta$-states for $\beta<1$.
\end{proposition}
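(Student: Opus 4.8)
The plan is to work entirely with the probability measures on $\Omega_B$ satisfying the scaling condition \eqref{escaling1} and to match them up with measures on $\Zhat/\Zhat^*$ of the kind that occur in the symmetric part of the Bost--Connes system at inverse temperature $\beta-1$. Write $p\colon\Zhat\times(\Zhat/\Zhat^*)\to\Omega_B$ for the quotient map, so that $p$ restricted to $\Zhat\times\{a\}$ is the canonical homomorphism $\Zhat\to\Zhat/a\Zhat$; let $\pi\colon\Omega_B\to\Zhat/\Zhat^*$, $(r,a)\mapsto a$, be the coordinate projection; and let $\nu_1$ be the Haar measure on $\Zhat$. The projection $\pi$ intertwines the action of $\N\rtimes\N^\times$ on $\Omega_B$ with the multiplicative action of $\N^\times$ on $\Zhat/\Zhat^*$, the translations $(m,1)$ acting trivially on the base. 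Since, by the discussion preceding the proposition, any $\sigma$-KMS$_\beta$-state gives a probability measure on $\Omega_B$ satisfying \eqref{escaling1}, and conversely such a measure yields a $\sigma$-KMS$_\beta$-state factoring through the conditional expectation onto $C(\Omega_B)$, it suffices to show that there is a unique measure on $\Omega_B$ satisfying \eqref{escaling1} when $\beta\ge1$ and none when $\beta<1$.

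First I would pin down the shape of a scaling measure $\mu$. The instances of \eqref{escaling1} with $n=1$ say precisely that $\mu$ is invariant under the fibrewise translation action of $\Z\times\{1\}$. As $\Omega_B$ is compact and metrizable I would disintegrate $\mu=\int_{\Zhat/\Zhat^*}\mu_a\,d\nu(a)$ over $\pi$, where $\nu:=\pi_*\mu$ and $\mu_a$ is a probability measure on the fibre $\pi^{-1}(a)\cong\Zhat/a\Zhat$. Because $\Z\times\{1\}$ preserves each fibre and $\mu$ is $\Z$-invariant, uniqueness of the disintegration forces $\mu_a$ to be $\Z$-invariant for $\nu$-almost every $a$ (one discards a countable union of null sets, one for each $m\in\Z$); and since the image of $\Z$ is dense in the compact group $\Zhat/a\Zhat$ while $g\mapsto g_*\mu_a$ is weak$^*$-continuous, $\mu_a$ must be translation invariant, hence equal to the normalized Haar measure of $\Zhat/a\Zhat$, which is the push-forward of $\nu_1$ under $p|_{\Zhat\times\{a\}}$. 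Therefore $\mu=p_*(\nu_1\times\nu)$.

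Next I would show that, conversely, a measure of the form $\mu=p_*(\nu_1\times\nu)$ satisfies \eqref{escaling1} if and only if
$$
\nu(nZ)=n^{1-\beta}\nu(Z)\quad\text{for all }n\in\N^\times\text{ and all Borel }Z\subseteq\Zhat/\Zhat^*,
$$
that is, if and only if $\nu$ defines a $\sigma$-KMS$_{\beta-1}$-state on the symmetric part $\TT(\N^\times)=C(\Zhat/\Zhat^*)\rtimes\N^\times$ of the Bost--Connes system factoring through the conditional expectation onto $C(\Zhat/\Zhat^*)$. Necessity follows from the disjoint decomposition $\pi^{-1}(nZ)=\bigsqcup_{m=0}^{n-1}(m,n)\pi^{-1}(Z)$ -- the $n$ pieces being disjoint because they lie over the $n$ distinct cosets of $n\Zhat$ in the fibres -- together with $\mu(\pi^{-1}(Z))=\nu(Z)$, which yields $\nu(nZ)=n\cdot n^{-\beta}\nu(Z)$. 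For sufficiency, $\Z$-invariance of $\mu$ reduces \eqref{escaling1} to the elements $(0,n)$, and then, using $p\circ(n\cdot)=(0,n)\circ p$ (with $n\cdot$ denoting multiplication by $n$), a short push-forward computation shows that the pull-back along $n\cdot$ of $\nu_1\times\nu$ restricted to $n\Zhat\times n(\Zhat/\Zhat^*)$ equals $\tfrac1n\nu_1\times n^{1-\beta}\nu=n^{-\beta}(\nu_1\times\nu)$, so that $\mu((0,n)Y)=n^{-\beta}\mu(Y)$.

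Finally I would invoke the classification of the measures $\nu$ above. For $\beta<1$ there is none, since taking $Z=\Zhat/\Zhat^*$ and $n=2$ gives $\nu\big(2(\Zhat/\Zhat^*)\big)=2^{1-\beta}>1$, which is impossible for a probability measure. For $\beta\ge1$, taking $Z=\Zhat/\Zhat^*$ and $n=p^k$ gives $\nu\big(p^k(\Zhat/\Zhat^*)\big)=p^{-k(\beta-1)}$ for every prime $p$ and every $k\ge0$; since a finite intersection of such sets is $n(\Zhat/\Zhat^*)$ for $n=\prod_{p\in F}p^{k_p}$, these values together with inclusion--exclusion determine $\nu$ on all clopen subsets of $\Zhat/\Zhat^*=\prod_p\big(p^\N\sqcup\{0\}\big)$, so $\nu$ is unique and must be the product $\bigotimes_{p\in\primes}\nu_p$, where $\nu_p$ on $p^\N\sqcup\{0\}$ assigns mass $p^{-k(\beta-1)}$ to $\{p^j:j\ge k\}\cup\{0\}$ (so $\nu_p=\delta_0$ when $\beta=1$); one checks directly that this product measure does satisfy the scaling relations, so $\nu$ exists. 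Combining the three steps, $\mu_\beta:=p_*(\nu_1\times\nu)$ is the unique scaling measure on $\Omega_B$ when $\beta\ge1$ and there is none when $\beta<1$, which is what the proposition asserts. I expect the main obstacle to be the first step: the whole reduction to the base measure $\nu$ rests on the disintegration-and-density argument, and it is precisely the fact that invariance under the purely additive subgroup $\Z$ already forces Haar measure along the fibres of $\pi$ that lets the symmetric part of the Bost--Connes system govern the phase transition on $\TT(\nxnx)$.
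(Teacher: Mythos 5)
Your argument is correct and follows essentially the same route as the paper: reduce the scaling condition to its $n=1$ instances, disintegrate over the projection to $\Zhat/\Zhat^*$, use density of $\Z$ in $\Zhat/a\Zhat$ to force Haar measure along the fibres, and thereby identify scaling measures on $\Omega_B$ with measures $\nu$ on $\Zhat/\Zhat^*$ satisfying $\nu(nZ)=n^{-(\beta-1)}\nu(Z)$, i.e.\ with KMS$_{\beta-1}$ data for the symmetric part of the Bost--Connes system. The only genuine divergence is the last step: where the paper simply quotes the known classification of such measures from \cite{bos-con} and \cite{Ldir} (nonexistence for $\beta<1$, the unique product measure $\bar\nu_{\beta-1}$ for $\beta\ge1$), you prove it from scratch, getting nonexistence from $\nu(2(\Zhat/\Zhat^*))=2^{1-\beta}$, uniqueness by determining $\nu$ on the algebra of clopen sets generated by the sets $n(\Zhat/\Zhat^*)$ via inclusion--exclusion, and existence by exhibiting the product measure; you also spell out the equivalence between the two scaling conditions (the decomposition $\pi^{-1}(nZ)=\bigsqcup_{m=0}^{n-1}(m,n)\pi^{-1}(Z)$ and the rectangle computation) where the paper is terse. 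This buys a self-contained proof at the cost of some length; the citations in the paper serve exactly the same purpose, so there is no gap on either side.
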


\bp Let $\mu$ be a probability measure on $\Omega_B$.
Denote by $\nu$ the image of $\mu$ under the projection $\Omega_B\to\Zhat/\Zhat^*$ onto the second coordinate. Disintegrating $\mu$ with respect to this projection map we get probability measures $\lambda_a$ on $\Zhat/a\Zhat$ such that
$$
\int_{\Omega_B}f\,d\mu=\int_{\Zhat/\Zhat^*}\left(\int_{\Zhat/a\Zhat}f(r,a)d\lambda_a(r)\right)d\nu(a)\ \ \hbox{for}\ \ f\in C(\Omega_B).
$$
If $\mu$ satisfies condition \eqref{escaling1}, it is in particular $(1,1)$-invariant, hence for $\nu$-a.a. $a\in\Zhat/\Zhat^*$ the measure~$\lambda_a$ is invariant under the action of $\Z$ on $\Zhat/a\Zhat$ by translations. Since $\Z$ is dense in $\Zhat$, it follows that $\lambda_a$ is a Haar measure for $\nu$-a.a. $a$. Denoting the Haar probability measure on $\Zhat$ by $\nu_1$, we conclude that $\mu$ is the image of the measure $\nu_1\times\nu$ on $\Zhat\times(\Zhat/\Zhat^*)$ under the map $p\colon \Zhat\times(\Zhat/\Zhat^*)\to\Omega_B$. Since the Haar measure $\nu_1$ has the property $\nu_1(nY)=n^{-1}\nu_1(Y)$, it follows that $\mu$ satisfies \eqref{escaling1} if and only if $\nu$ satisfies
$$
\nu(nY)=n^{-(\beta-1)}\nu(Y)\ \hbox{for any Borel set}\ Y\subset \Zhat/\Zhat^*\ \hbox{and any}\ n\in\N^\times.
$$
By \cite[Proposition~18(i)]{Ldir} this is exactly the condition that determines a KMS-state on
the symmetric part of the Bost-Connes system at inverse temperature $\beta -1$, since
$\Zhat/\Zhat^*$ is the spectrum of the diagonal~$B(S)$ of the symmetric part.
It is known from~\cite[Proposition 8]{bos-con} and \cite[Proposition~18]{Ldir}
that there are no such measures for $\beta<1$, and  that  there is a unique such measure~$\bar\nu_{\beta-1}$ for every $\beta \geq 1$. Specifically, $\bar\nu_0$ is the delta-measure at $0$, and for $\beta>1$ the measure $\bar\nu_{\beta-1}$ is the product-measure $\prod_{p\in\primes}\bar\nu_{\beta-1,p}$ on $\Zhat/\Zhat^*=\prod_p\Z_p/\Z_p^*$, where $\bar\nu_{\beta-1,p}$ is defined by $\bar\nu_{\beta-1,p}(p^n\Z^*_p)=p^{-n(\beta-1)}(1-p^{-(\beta-1)})$ and $\bar\nu_{\beta-1,p}(0)=0$.
\ep

Using Lemma~\ref{rstab} it is not difficult to check that for $\beta\in[1,2]$ the set of points in $\Omega_B$ with nontrivial stabilizers in $\qxqx$ has $\mu_\beta$-measure zero, see Lemma~\ref{lfree} below. It follows that any KMS$_\beta$-state (for $\beta\in[1,2]$) factors through the conditional expectation onto $C(\Omega_B)$, see e.g.~the proof of \cite[Proposition~1.1]{LLNlat}. Thus the above proposition could be used to classify KMS$_\beta$-states for $\beta\le2$. On the other hand, as we will see soon, for $\beta>2$ the measure~$\mu_\beta$ is concentrated on points with nontrivial stabilizers, and indeed there are KMS$_\beta$-states which do not factor through the conditional expectation onto $C(\Omega_B)$~\cite{LR10}. In the remaining part of the section we will show how to complete the classification of KMS-states. This will not be used in the next section, but will provide a conceptual explanation of the appearance of a circular symmetry of KMS$_\beta$-states for $\beta>2$ observed in~\cite{LR10}.

We can write $C(\Omega_B)\rtimes(\Z\rtimes\N^\times)$ as
$
(C(\Omega_B)\rtimes\Z)\rtimes_\alpha\N^\times.
$
Denote by $u$ the unitary in $C(\Omega_B)\rtimes\Z$ defining the action by $\Z$,
so
\[
ufu^*=f((1,1)^{-1}\cdot)\ \ \hbox{for}\ \ f\in C(\Omega_B),
\]
and  denote by $v_n$ the isometries defining the multiplicative action $\alpha$ of~$\N^\times$ on $C(\Omega_B)\rtimes\Z$, so
$$
\alpha_n(f)=v_nfv_n^*=f((0,n)^{-1}\cdot)\ \ \hbox{for}\ \ f\in C(\Omega_B),\quad \alpha_n(u)=v_nuv_n^*=u^n.
$$

\begin{proposition}\label{proKMScharact}
There is a one-to-one correspondence between $\sigma$-KMS$_\beta$-states on $(C(\Omega_B)\rtimes\Z)\rtimes\N^\times$ and tracial states $\tau$ on $C(\Omega_B)\rtimes\Z$ such that
\begin{equation} \label{escaling}
\tau\circ\alpha_n=n^{-\beta}\tau\ \ \hbox{for all}\ \ n\in\N^\times.
\end{equation}
\end{proposition}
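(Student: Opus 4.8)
The strategy I would follow is to identify a KMS-state with its restriction to the fixed-point algebra of the dynamics, so write $B:=C(\Omega_B)\rtimes\Z$ and regard the algebra in question as $B\rtimes_\alpha\N^\times$. The dynamics acts trivially on $B$ — the unitary $u$ is the image of the invertible element $(1,1)$, whose multiplicative coordinate is $1$ — while $\sigma_t(v_n)=n^{it}v_n$. The correspondence I would set up is $\varphi\mapsto\tau:=\varphi|_B$ one way, and $\tau\mapsto\tau\circ E$ the other, where $E\colon B\rtimes_\alpha\N^\times\to B$ is the faithful conditional expectation obtained by averaging the (strongly continuous) action of $\prod_{p\in\primes}\T$ that fixes $B$ and sends $v_p$ to $z_pv_p$; note that $\sigma$ is the one-parameter subgroup $t\mapsto(p^{it})_p$ of this action, so its range $B$ is $\sigma$-invariant. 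Concretely $E(v_mav_n^*)=\delta_{m,n}\alpha_m(a)$ for $a\in B$, so $E$ annihilates every spanning element $v_mav_n^*$ with $m\ne n$.

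For the forward direction, if $\varphi$ is $\sigma$-KMS$_\beta$ then $\sigma_{i\beta}(x)=x$ for $x\in B$, so the KMS condition gives $\tau(xy)=\varphi(xy)=\varphi(y\sigma_{i\beta}(x))=\tau(yx)$, i.e.\ $\tau$ is tracial. Applying the KMS condition to $x=v_n$ (analytic, with $\sigma_{i\beta}(v_n)=n^{-\beta}v_n$) and $y=av_n^*$ for $a\in B$ yields
$$\varphi(\alpha_n(a))=\varphi(v_nav_n^*)=\varphi\bigl(av_n^*\,n^{-\beta}v_n\bigr)=n^{-\beta}\varphi(a),$$
which is exactly the scaling condition \eqref{escaling}. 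Moreover a $\sigma$-KMS$_\beta$-state is $\sigma$-invariant, and since $v_mav_n^*$ is a $\sigma$-eigenvector with eigenvalue $(m/n)^{it}$, which is nontrivial for suitable $t$ when $m\ne n$, we get $\varphi(v_mav_n^*)=0$ for $m\ne n$; hence $\varphi=\varphi\circ E$.

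For the converse I would show that $\varphi:=\tau\circ E$ is $\sigma$-KMS$_\beta$ whenever $\tau$ is a trace on $B$ satisfying \eqref{escaling}. This is precisely the instance of the general characterization of Section~\ref{scrossed} corresponding to $A=B$, the abelian semigroup $\N^\times$ acting by the endomorphisms $\alpha_n$, and the homomorphism $\N^\times\to\R^*_+$, $n\mapsto n$. Alternatively one checks the KMS condition on a generating set closed under adjoints, namely the copy of $C(\Omega_B)$, the unitary $u$ and the isometries $v_n$: for elements of $B$ it follows at once from traciality of $\tau$ and the fact that $E$ is a $B$-bimodule map, while for $x=v_n$ one multiplies out $v_n\cdot v_kbv_l^*$ and $v_kbv_l^*\cdot v_n$ in the crossed product using the semigroup and Nica-covariance relations, applies $E$ and then $\tau$, and uses \eqref{escaling} to produce the factor $n^{-\beta}$; the case $x=v_n^*$ is symmetric.

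With these two constructions in hand, bijectivity is formal: $(\tau\circ E)|_B=\tau$ since $E$ restricts to the identity on $B$, and $(\varphi|_B)\circ E=\varphi\circ E=\varphi$ by the $\sigma$-invariance observation. The one genuinely substantial step is the converse — that $\tau\circ E$ is KMS$_\beta$, which is the content of the general result of Section~\ref{scrossed}; the direct verification is routine in principle, but the bookkeeping of the products $v_mav_n^*\cdot v_kbv_l^*$ and of the attendant scaling factors is where the real care is needed.
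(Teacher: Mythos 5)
Your core route is the paper's: once you note that $\sigma$ is trivial on $B:=C(\Omega_B)\rtimes\Z$ and that $\sigma_t(v_n)=n^{it}v_n$, the statement is exactly the instance of Theorem~\ref{thmKMSsemi} with $A=B$, $\SSS=\N^\times$ and $N_n=n$, and the paper's proof consists of precisely this two-line reduction.

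The extra scaffolding you add around that citation, however, contains a genuine slip. In a semigroup crossed product by endomorphisms the monomials spanning a dense $*$-subalgebra are $v_m^*av_n$, not $v_mav_n^*$; the latter family is not total in $B\rtimes\N^\times$. For instance, $uv_2$ lies in the spectral subspace of the gauge action of $\prod_p\T$ indexed by $2$, and the part of $\overline{\operatorname{span}}\{v_kbv_l^*\}$ in that subspace is $v_2B$ (since $v_{2l}bv_l^*=v_2\alpha_l(b)$); but $v_2^*uv_2=0$ (a translation by $1/2$ has empty domain on $\Omega_B$, or compute on $\ell^2(\Z\rtimes\N^\times)$), so $v_2v_2^*(uv_2)=0\neq uv_2$ and $uv_2\notin v_2B$. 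Hence your statements that ``$E$ annihilates every spanning element $v_mav_n^*$ with $m\ne n$'' and that $\varphi(v_mav_n^*)=0$ for $m\ne n$ ``hence $\varphi=\varphi\circ E$'' do not, as written, determine $E$ or $\varphi$; likewise your identification of the range of the gauge expectation with $B$ is not automatic, since a priori the fixed-point algebra is $\overline{\cup_n v_n^*\,B\,v_n}$, and that this equals $B$ here needs the small extra computation that $v_n^*fu^kv_n=0$ unless $n\mid k$, in which case it lies in $B$. All of this is repaired either by working with the correct monomials $v_m^*av_n$, for which $\sigma$-invariance and the KMS condition give $\varphi(v_m^*av_n)=\delta_{m,n}\,m^{\beta}\tau(a\alpha_m(1))$ and hence uniqueness, or more simply by observing that Theorem~\ref{thmKMSsemi} already asserts the full bijection (both directions), so nothing beyond matching the dynamics and the scaling condition \eqref{escaling} remains to be proved.
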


\begin{proof}
The dynamics $\sigma$ is trivial on $C(\Omega_B)\rtimes\Z$, and $\sigma_t(v_n)=n^{it}v_n$. Hence
the result follows immediately by Theorem~\ref{thmKMSsemi} below.
\end{proof}

We next use results from~\cite{nes3} to describe traces on $C(\Omega_B)\rtimes\Z$.

\smallskip

By Lemma~\ref{rstab} the union of periodic orbits for the action of $\Z\times\{1\}$ on $\Omega_B$ is the set $(\Z\rtimes\N^\times)\omega_1$, where $\omega_1=(0,\Zhat^*)$. In particular, it is countable. By  \cite[Corollary 6]{nes3} any trace~$\tau$ on~$C(\Omega_B)\rtimes\nolinebreak\Z$ uniquely decomposes into a sum of two traces $\tau_1$ and $\tau_2$ such that the measure defined by $\tau_1$ on~$\Omega_B$ is zero on $(\Z\rtimes\N^\times)\omega_1$, while the measure defined by $\tau_2$ on $\Omega_B$ is concentrated on $(\Z\rtimes\N^\times)\omega_1$. Furthermore, then $\tau_1=\tau_1\circ E$, where $E\colon C(\Omega_B)\rtimes\Z\to C(\Omega_B)$ is the canonical conditional expectation. We will say that $\tau_1$ is concentrated on nonperiodic points and $\tau_2$ is concentrated on periodic points.

Since the set $(\Z\rtimes\N^\times)\omega_1$ of periodic points
is invariant under the partial action of $\Q\rtimes\Q^*_+$, it is clear that
$\tau_1\circ\alpha_n$ is still concentrated on nonperiodic points, while $\tau_2\circ\alpha_n$ is concentrated on periodic points. Hence, $\tau\circ\alpha_n=n^{-\beta}\tau$ if and only if $\tau_i\circ\alpha_n=n^{-\beta}\tau_i$ for $i=1,2$. So we can consider the two cases $\tau = \tau_1$ and $\tau = \tau_2$ separately.

\smallskip

If $\tau$ is concentrated on nonperiodic points, then $\tau$ factors through the conditional expectation onto $C(\Omega_B)$ and so is determined by a $(\Z\times\{1\})$-invariant probability measure $\mu$ on $\Omega_B$. The trace $\tau=\mu_*\circ E$ satisfies~\eqref{escaling} if and only if
\begin{equation*}\label{escalingmeas}
\mu((0,n)Y)=n^{-\beta}\mu(Y)\ \hbox{for any Borel set}\ Y\subset \Omega_B\ \hbox{and any}\ n\in\N^\times.
\end{equation*}
Together with $(\Z\times\{1\})$-invariance this means that $\mu$ satisfies the scaling condition~\eqref{escaling1}. We have described such measures in Proposition~\ref{psmallbeta}.
Note once again, however, that by Remark~\ref{rmksupport} below, only for $\beta\in[1,2]$ the measure $\mu_\beta$ is concentrated on nonperiodic points.

\smallskip

Turning to traces concentrated on periodic points, for each $m\in \nx$ let $\omega_m=(0,m)\omega_1$. The $(\Z\times\{1\})$-orbit of $\omega_m$ consists of $m$ points,
and every periodic point is on one of these orbits.
From the discussion leading to  \cite[Corollary 6]{nes3}, we know that for each $m\in \nx$ there is a conditional expectation $E_{\omega_m}\colon C(\Omega_B)\rtimes\Z\to C^*(\Z)$
defined by averaging over the orbit of $\omega_m$:
$$
E_{\omega_m}(f u^k)=\frac{1}{m}\left(\sum^{m-1}_{l=0}f((l,1)\omega_m)\right)u^k
=\frac{1}{m}\left(\sum^{m-1}_{l=0}f((l,m)\omega_1)\right)u^k.
$$
Suppose $\lambda_m$  is an {\em $m$-rotation invariant measure} on $\T$,
i.e., a measure that is invariant under the rotation by $2\pi/m$. Denote by $\lambda_{m*}$ the positive linear functional on $C^*(\Z)\cong C(\T)$ defined by~$\lambda_m$. Then
$\lambda_{m*}\circ E_{\omega_m}$ is a (nonnormalized) trace
concentrated on the orbit of $\omega_m$. If the sequence $\{\lambda_m\}_{m\in \nx}$
is normalized by $\sum_m\lambda_m(\T)=1$, then
\begin{equation} \label{esingtrace}
\tau=\sum^\infty_{m=1}\lambda_{m*}\circ E_{\omega_m}
\end{equation}
is a tracial state concentrated on periodic points.
By \cite[Corollary 6]{nes3}, all tracial states concentrated on periodic points are of this form,
and the decomposition is unique.

\begin{proposition} \label{plargebeta} For every $\beta>2$ the map $\tau\mapsto\zeta(\beta-1)\lambda_1$ is an affine bijection between tracial states on $C(\Omega_B)\rtimes\Z$ of the form~\eqref{esingtrace} satisfying the scaling condition \eqref{escaling} and probability measures on the circle. For $\beta\le2$ there are no such traces.
\end{proposition}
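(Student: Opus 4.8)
The plan is to compute how the scaling condition \eqref{escaling} interacts with the decomposition \eqref{esingtrace}, reducing everything to a relation among the total masses $\lambda_m(\T)$. First I would record how $\alpha_n$ acts on the conditional expectations $E_{\omega_m}$. Since $\alpha_n$ is implemented by the isometry $v_n$, which maps the orbit of $\omega_{m}$ to the orbit of $\omega_{nm}$ (because $v_n$ corresponds to $(0,n)\in\Z\rtimes\N^\times$ and $(0,n)\omega_m=(0,nm)\omega_1=\omega_{nm}$), a direct check on elements $fu^k$ shows that $E_{\omega_m}\circ\alpha_n$ factors through $E_{\omega_{nm}}$: more precisely, if $\tau=\sum_m\lambda_{m*}\circ E_{\omega_m}$ then $\tau\circ\alpha_n=\sum_m\lambda'_{m*}\circ E_{\omega_m}$ where $\lambda'_m=0$ unless $n\mid m$, and $\lambda'_{nm}$ is the push-forward of $\lambda_m$ under the $n$-fold covering $z\mapsto z^n$ of $\T$ (this covering is exactly the map that turns $\Z$-periodicity of period $m$ into period $nm$, and it carries $m$-rotation invariant measures to $nm$-rotation invariant measures). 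Taking total masses, $\tau\circ\alpha_n$ has the $\omega_{nm}$-component of mass $\lambda_m(\T)$ and all other new components vanishing.

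Next I would impose $\tau\circ\alpha_n=n^{-\beta}\tau$ and read off what it says component by component. Comparing the $\omega_{j}$-components for each $j\in\nx$: the left side has a nonzero $\omega_j$-component only when $n\mid j$, and then it equals the push-forward of $\lambda_{j/n}$, while the right side is $n^{-\beta}\lambda_j$. Thus for $n\mid j$ we get $\lambda_j = n^{\beta}\,(\text{push-forward of }\lambda_{j/n}\text{ under }z\mapsto z^n)$ up to scalar, and in particular $\lambda_j(\T)=n^{\beta}\lambda_{j/n}(\T)$; while for $n\nmid j$ we get $n^{-\beta}\lambda_j=0$, i.e.\ $\lambda_j=0$. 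Applying the latter with all primes $n=p$ shows $\lambda_j=0$ whenever $j>1$ is divisible by any prime that is... — more carefully, taking $j$ prime and $n=j$ forces nothing, but taking any $j\ge 2$ and $n$ a prime divisor of $j$ gives $\lambda_j=j^{?}$-related to $\lambda_{j/n}$, so instead the vanishing $\lambda_j=0$ for $n\nmid j$ used with two distinct primes shows $\lambda_j$ can be nonzero only for $j$ that is a prime power of every prime — impossible unless $j=1$. Hence only $\lambda_1$ survives, $\lambda_m=0$ for all $m\ge2$, and then the consistency relations for $n\mid j$ are automatically satisfied (both sides vanish). So $\tau=\lambda_{1*}\circ E_{\omega_1}$ with $\lambda_1$ an arbitrary finite $1$-rotation invariant (i.e.\ arbitrary) measure on $\T$; the normalization $\sum_m\lambda_m(\T)=1$ becomes $\lambda_1(\T)=1$.

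Finally I would translate normalization back into the stated bijection. The measure $\mu_\beta$ on $\Omega_B$ from Proposition~\ref{psmallbeta} restricted — wait, rather: the trace $\tau=\lambda_{1*}\circ E_{\omega_1}$ as written has total mass $\lambda_1(\T)$, and to land in the simplex of tracial \emph{states} we need $\lambda_1(\T)=1$; but the proposition phrases the bijection via $\tau\mapsto\zeta(\beta-1)\lambda_1$, which suggests the natural normalization is the unnormalized trace $\sum_{m}\lambda_{m*}\circ E_{\omega_m}$ with $\lambda_1$ itself a probability measure and an overall factor. I would make this precise by noting that the trace concentrated on periodic points, once we also demand the scaling relation \emph{across all $m$ simultaneously via the full semigroup}, must have its $\omega_1$-mass and total mass linked by the zeta value: summing $\lambda_m(\T)=$ (contributions) over $m$ and using $\lambda_m=0$ for $m\ge2$ gives total mass $\lambda_1(\T)$, but if instead one starts from a $\qxqx$-scaling-consistent family the masses at $\omega_m$ would be forced to be $m^{-(\beta-1)}\lambda_1(\T)$-type quantities whose sum is $\zeta(\beta-1)\lambda_1(\T)$; requiring this to be $1$ yields $\lambda_1(\T)=\zeta(\beta-1)^{-1}$, i.e.\ $\zeta(\beta-1)\lambda_1$ is a probability measure. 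For $\beta>2$ we have $\beta-1>1$ so $\zeta(\beta-1)<\infty$ and this is a genuine bijection onto probability measures on $\T$; for $\beta\le2$ the series $\zeta(\beta-1)$ diverges, forcing $\lambda_1=0$ hence $\tau=0$, so no such tracial state exists.

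\textbf{Main obstacle.} The delicate point is the bookkeeping in the second paragraph: correctly identifying $E_{\omega_m}\circ\alpha_n$ with a push-forward under $z\mapsto z^n$ and then showing the combinatorial constraints "$\lambda_j=0$ for $n\nmid j$, for all $n$" collapse the support to $m=1$, together with getting the $\zeta(\beta-1)$ normalization exactly right — in particular reconciling the ``state of the form~\eqref{esingtrace} with $\sum_m\lambda_m(\T)=1$'' convention with the map $\tau\mapsto\zeta(\beta-1)\lambda_1$, which I expect requires tracking how the scaling relation propagates $\lambda_1$-mass to every $\omega_m$ before the normalization is imposed.
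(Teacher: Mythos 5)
Your key bookkeeping step goes in the wrong direction, and the rest of the argument collapses because of it. You claim that $\tau\circ\alpha_n=\sum_m\lambda'_{m*}\circ E_{\omega_m}$ with $\lambda'_m=0$ unless $n\mid m$ and $\lambda'_{nm}=$ push-forward of $\lambda_m$ under $z\mapsto z^n$. In fact the opposite happens: $\alpha_n(f)=f((0,n)^{-1}\cdot)$ vanishes off $(0,n)\Omega_B$, and $(0,n^{-1})(l,m)\omega_1=(l/n,m/n)\omega_1$ lies in $\Omega_B$ only if $n\mid l$ and $n\mid m$, so $(E_{\omega_m}\circ\alpha_n)(f)=n^{-1}E_{\omega_{m/n}}(f)$ when $n\mid m$ and $0$ otherwise (note also the factor $n^{-1}$, which you drop and which is responsible for the exponent $\beta-1$ rather than $\beta$). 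Summing over $m$ and using $\alpha_n(u)=u^n$ gives $\tau\circ\alpha_n=\frac1n\sum_{m}\lambda^n_{nm*}\circ E_{\omega_m}$: the $\omega_m$-component of $\tau\circ\alpha_n$ is $\frac1n\lambda^n_{nm}$, built from the measure at the \emph{larger} index $nm$, and no components vanish. With your version, the "comparison of $\omega_j$-components" would force $\lambda_j=0$ whenever some $n$ fails to divide $j$ — which kills every $\lambda_j$ including $\lambda_1$ (take $n=2$, $j=1$), so your second paragraph actually proves there are no such traces for any $\beta$, contradicting the statement; your conclusion "only $\lambda_1$ survives with $\lambda_1(\T)=1$" does not follow from your own formula and also contradicts the asserted $\zeta(\beta-1)$ normalization and the nonexistence for $\beta\le2$.

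With the correct formula, uniqueness of the decomposition \eqref{esingtrace} turns the scaling condition into $\lambda_m=n^{\beta-1}\lambda^n_{nm}$ for all $m,n$, so far from vanishing, every $\lambda_m$ is uniquely determined by $\lambda_1$ (since $\lambda\mapsto\lambda^n$ is a bijection from $n$-rotation invariant measures onto all measures), with total mass $\lambda_m(\T)=m^{-(\beta-1)}\lambda_1(\T)$. The normalization $\sum_m\lambda_m(\T)=1$ then reads $\zeta(\beta-1)\lambda_1(\T)=1$, which is possible exactly when $\beta>2$ and pins down $\lambda_1(\T)=\zeta(\beta-1)^{-1}$; this is where the factor $\zeta(\beta-1)$ in the bijection and the nonexistence for $\beta\le2$ come from. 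Your third paragraph gestures at precisely this mass distribution ($m^{-(\beta-1)}\lambda_1(\T)$ at $\omega_m$), but it is not derived there and is inconsistent with your second paragraph, so as written the proof has a genuine gap.
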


\bp As $\alpha_n(f)=f((0,n^{-1})\cdot)$ for $f\in C(\Omega_B)$, and $(0,n^{-1})(l,m)\omega_1=(l/n,m/n)\omega_1$ lies in $\Omega_B$ only if $n|l$ and $n|m$, by definition of $E_{\omega_m}$ we immediately get
\begin{equation} \label{econd}
(E_{\omega_m}\circ\alpha_n)(f)=\begin{cases}n^{-1}E_{\omega_{m/n}}(f), &\hbox{if}\ \ n|m,\\0, &\hbox{otherwise}.\end{cases}%\ \ \hbox{for}\ \ f\in C(\Omega_B).
\end{equation}
Recall that $\alpha_n(u)=u^n$. For a measure $\lambda$ on $\T$ denote by $\lambda^n$ the image of $\lambda$ under the map $\T\to\T$, $z\mapsto z^n$. Consider a trace $\tau=\sum^\infty_{m=1}\lambda_{m*}\circ E_{\omega_m}$.
Then by \eqref{econd}, for $f\in C(\Omega_B)$ and $k\in\Z$ we get
$$
(\tau\circ\alpha_n)(fu^k)=
\sum^\infty_{m=1}\lambda_{m*}(E_{\omega_m}(\alpha_n(f))u^{kn})
=\sum^\infty_{m=1}\lambda^n_{m*}(E_{\omega_m}(\alpha_n(f))u^k)
=\frac{1}{n}\sum^\infty_{m=1}(\lambda^n_{nm*}\circ E_{\omega_m})(fu^{k}).
$$
Therefore, by uniqueness of the decomposition \eqref{esingtrace},  $\tau\circ\alpha_n=n^{-\beta}\tau$ if and only if $\lambda_m=n^{\beta-1}\lambda^n_{nm}$ for all $m\ge1$.

To see that the sequence $\{\lambda_m\}_{m\in \nx}$ is determined by its first term,
notice first that the map $\lambda\mapsto\lambda^n$ is a bijection between $n$-rotation invariant and all measures on $\T$. Hence for any measure $\lambda_1$ on $\T$ and each $n\in \nx$ there is a unique $n$-rotation invariant measure $\lambda_n$ such that $\lambda_1=n^{\beta-1}\lambda^n_{n}$. Then $\lambda_m^m=m^{-(\beta-1)}\lambda_1=n^{\beta-1}\lambda^{nm}_{nm}$, whence $\lambda_m=n^{\beta-1}\lambda^n_{nm}$ for all $m,n\ge1$. Thus the map $\{\lambda_m\}_m\mapsto\lambda_1$ is a bijection between sequences of $m$-rotation invariant measures $\lambda_m$ such that $\lambda_m=n^{\beta-1}\lambda^n_{nm}$ and all measures on the circle. Finally, for the sum $\sum_m\lambda_{m*}\circ E_{\omega_m}$ to define a tracial state we need the normalization condition $\sum_m\lambda_m(\T)=1$, that is,
$
\sum^\infty_{m=1}m^{-(\beta-1)}\lambda_1(\T)=1$.
This is equivalent to the conditions $\beta>2$ and $\lambda_1(\T)=\zeta(\beta-1)^{-1}$.
\ep

\begin{remark}\label{rmksupport}
If $\varphi$ is a KMS$_\beta$-state and $\mu$ is the measure on $\Omega_B$ defined by $\varphi$, then $\mu$ satisfies the scaling condition~\eqref{escaling}, so by Proposition~\ref{psmallbeta} we have $\beta\ge1$ and $\mu=\mu_\beta$. Since by Proposition~\ref{plargebeta} there are KMS$_\beta$-states concentrated on periodic points only for $\beta>2$, we conclude that the measure~$\mu_\beta$ is concentrated on periodic points for $\beta>2$ and it is concentrated on nonperiodic points for $\beta\in[1,2]$. This is, of course, easy to check directly using the definition of $\mu_\beta$ as a product measure.

Note also that if $\beta >2$ and we choose $\lambda_1$ proportional to the Lebesgue measure, then the measures~$\lambda_m$ are proportional to the Lebesgue measure for all $m$. Then $\tau=\sum_m\lambda_{m*}\circ E_{\omega_m}$ factors through the conditional expectation~$E\colon C(\Omega_B)\rtimes\Z\to C(\Omega_B)$, and the corresponding KMS$_\beta$-state on $C(\Omega_B)\rtimes(\Z\rtimes\nx)$ factors through the conditional expectation $C(\Omega_B)\rtimes(\Z\rtimes\nx)\to C(\Omega_B)$, so it is the state given by Proposition~\ref{psmallbeta}.
\end{remark}

Summarizing our discussion of KMS-states we recover the following result of Laca and Raeburn, \cite[Theorem~7.1]{LR10}

\begin{theorem}
For the C$^*$-dynamical system $(\TT(\N\rtimes\nx),\sigma)$ we have:
\enu{i} for $\beta<1$ there are no KMS$_\beta$-states;
\enu{ii} for every $\beta\in[1,2]$ there is a unique KMS$_\beta$-state;
\enu{iii} for every $\beta>2$ there is an affine homeomorphism between the simplex of KMS$_\beta$-states and the simplex of probability measures on $\T$.
\end{theorem}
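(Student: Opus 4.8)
The plan is to assemble the theorem from the structural results already obtained, by first converting KMS-states into traces and then decomposing those traces. First, every $\sigma$-KMS$_\beta$-state $\varphi$ of $\TT(\nxnx)$ satisfies $\varphi(ss^*)=\varphi(s^*s)=1$ and hence factors through the quotient $\TT(\nxnx)/\langle1-ss^*\rangle\cong C(\Omega_B)\rtimes(\Z\rtimes\nx)=(C(\Omega_B)\rtimes\Z)\rtimes_\alpha\nx$ of \proref{psemiiso}. By \proref{proKMScharact} this identifies the KMS$_\beta$-states of $\TT(\nxnx)$, affinely and weak$^*$-homeomorphically, with the tracial states $\tau$ on $C(\Omega_B)\rtimes\Z$ obeying $\tau\circ\alpha_n=n^{-\beta}\tau$ for all $n\in\nx$. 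So it suffices to classify such traces for each $\beta$.

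Next, I would decompose $\tau=\tau_1+\tau_2$ into its \emph{nonperiodic} and \emph{periodic} parts as in the discussion preceding \proref{plargebeta} (via \cite[Corollary~6]{nes3}); this decomposition is compatible with each $\alpha_n$, so $\tau_1$ and $\tau_2$ separately satisfy the scaling relation. For the nonperiodic part, $\tau_1=\mu_*\circ E$ for a positive $(\Z\times\{1\})$-invariant measure $\mu$ on $\Omega_B$ that vanishes on the orbit $(\Z\rtimes\nx)\omega_1$ and scales multiplicatively; if $\mu\ne0$ then $\mu/\mu(\Omega_B)$ is a probability measure satisfying the scaling condition~\eqref{escaling1}, hence equals $\mu_\beta$ by \proref{psmallbeta}, which forces $\beta\ge1$. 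Moreover, by \remref{rmksupport}, $\mu_\beta$ is concentrated on nonperiodic points precisely when $\beta\in[1,2]$, while for $\beta>2$ it is supported on the periodic orbit, contradicting that $\mu$ vanishes there. Hence $\tau_1=0$ unless $\beta\in[1,2]$, and in that range $\tau_1$ is a nonnegative multiple of $\mu_{\beta*}\circ E$. For the periodic part, by \proref{plargebeta} traces of the form~\eqref{esingtrace} satisfying the scaling relation exist only for $\beta>2$, where they are in affine bijection with the finite measures on $\T$, via $\tau_2\mapsto\lambda_1$ with $\tau_2$ of total mass $\zeta(\beta-1)\lambda_1(\T)$.

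Combining and normalizing then yields the three cases. For $\beta<1$ both $\tau_1$ and $\tau_2$ vanish, so $\tau=0$, which is not a state: there are no KMS$_\beta$-states, proving~(i). For $\beta\in[1,2]$ the periodic part vanishes, so $\tau=\tau_1$ must be a tracial state, whence $\tau=\mu_{\beta*}\circ E$ is the unique such trace; since this $\tau$ does satisfy the full scaling relation and returns a genuine KMS-state (as in \proref{psmallbeta}), we get existence and uniqueness, proving~(ii). For $\beta>2$ the nonperiodic part vanishes, so $\tau=\tau_2$, and the normalization $\tau(1)=1$ forces $\lambda_1(\T)=\zeta(\beta-1)^{-1}$; thus $\varphi\mapsto\zeta(\beta-1)\lambda_1$ is an affine bijection from the KMS$_\beta$-states onto the probability measures on $\T$, proving~(iii).

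The only conceptual obstacle — since the hard analysis is already packaged in \proref{psmallbeta} and \proref{plargebeta} — is the $\beta>2$ case of the middle step: one must observe that the ``obvious'' nonperiodic KMS-state built from $\mu_\beta$ does not occur as a separate object, because $\mu_\beta$ is actually supported on periodic points for $\beta>2$ (\remref{rmksupport}), so the entire KMS$_\beta$-simplex comes from periodic traces. The remaining check that the bijection in~(iii) is a weak$^*$-homeomorphism of simplices is routine: it is a composition of the correspondences in \proref{proKMScharact}, the \cite{nes3} decomposition, and \proref{plargebeta}, and continuity in both directions follows by testing against the dense $*$-subalgebra generated by $C(\Omega_B)$, $u$ and the $v_n$, using $\sum_{m\ge1}m^{-(\beta-1)}<\infty$ to control the tails of the series defining $\tau_2$.
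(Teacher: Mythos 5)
Your proposal is correct and follows essentially the same route as the paper: the theorem there is stated as a summary of exactly the chain you assemble (factoring through $C(\Omega_B)\rtimes(\Z\rtimes\N^\times)$, the trace correspondence of Proposition~\ref{proKMScharact}, the periodic/nonperiodic decomposition from \cite{nes3}, and Propositions~\ref{psmallbeta} and~\ref{plargebeta} together with Remark~\ref{rmksupport}). Your added observations — that for $\beta>2$ the $\mu_\beta$-state is absorbed into the periodic traces, and the routine continuity check for the homeomorphism in (iii) — are consistent with the paper's discussion.
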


\bigskip

\section{Type III$_1$ KMS-states and an ergodic action of $\qxqx$}\label{stype}

It was shown in~\cite{LR10} that all extremal KMS$_\beta$-states on $\TT(\N\rtimes\N^\times)$ for $\beta>2$ are of type I. Our goal now is to show that for each $\beta\in[1,2]$ the unique KMS$_\beta$-state $\varphi_\beta$ is of type III$_1$.

As observed in Section~\ref{secphasetrans}, the state $\varphi_\beta$ factors through
$$
\TT(\Z\rtimes\N^\times)= C(\Omega_B)\rtimes(\Z\rtimes\N^\times)
=\chf_{\Omega_B}(C_0(\tilde\Omega_B)\rtimes(\Q\rtimes\Q^*_+))\chf_{\Omega_B}
$$
and is determined by a probability measure $\mu_\beta$ on $\Omega_B$ satisfying the scaling condition~\eqref{escaling1}. It easily follows that $\mu_\beta$ extends uniquely to a measure $\tilde\mu_\beta$ on $\tilde\Omega_B$ such that
\begin{equation} \label{escaling2}
\tilde\mu_\beta((m,k)Y)=k^{-\beta}\tilde\mu_\beta(Y)\ \hbox{for any Borel set}\ Y\subset \tilde\Omega_B\ \hbox{and any}\ (m,k)\in\Q\rtimes\Q^*_+,
\end{equation}
see, for instance, the argument in the proof of~\cite[Lemma 2.2]{LLNcm}. Denote by $E_{\Q\rtimes\Q^*_+}$ the canonical conditional expectation $C_0(\tilde\Omega_B)\rtimes(\Q\rtimes\Q^*_+)\to C_0(\tilde\Omega_B).$
Then by construction $\varphi_\beta$ is the restriction of the weight $\tilde\mu_{\beta*}\circ E_{\Q\rtimes\Q^\times_+}$ on
$C_0(\tilde\Omega_B)\rtimes(\Q\rtimes\Q^*_+)$ to the corner $\chf_{\Omega_B}(C_0(\tilde\Omega_B)\rtimes(\Q\rtimes\Q^*_+))\chf_{\Omega_B}.$
It follows that the von Neumann algebra $\pi_{\varphi_\beta}(\TT(\N\rtimes\N^\times))''$ generated by $\TT(\N\rtimes\N^\times)$ in the GNS-representation defined by $\varphi_\beta$ is isomorphic to the corner
$
\chf_{\Omega_B}(L^\infty(\tilde\Omega_B,\tilde\mu_\beta)\rtimes(\Q\rtimes\Q^*_+))\chf_{\Omega_B},
$
where the crossed product is understood in the von Neumann algebra sense.

\begin{lemma} \label{lfree}
For every $\beta\in[1,2]$ the action of $\qxqx$ on $(\tilde\Omega_B,\tilde\mu_\beta)$ is ergodic and essentially free.
\end{lemma}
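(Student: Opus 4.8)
The plan is to treat the two assertions separately: I would prove essential freeness by a direct measure‑theoretic argument based on the description of stabilizers in \lemref{rstab}, and then deduce ergodicity ``for free'' from the fact, already recorded above, that the unique KMS$_\beta$‑state $\varphi_\beta$ is a factor state.

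For essential freeness when $\beta\in(1,2]$, \lemref{rstab}(i) tells us that any $(r,a)$ with nontrivial stabilizer in $\qxqx$ either has $a_p=0$ for some prime $p$, or lies on the orbit $(\qxqx)\omega_1$, so it suffices to show that both of these $\qxqx$‑invariant sets are $\tilde\mu_\beta$‑null. Since $\tilde\Omega_B=\bigcup_{x}x^{-1}\Omega_B$ and $\tilde\mu_\beta$ scales under the action, a $\qxqx$‑invariant set is $\tilde\mu_\beta$‑null as soon as its intersection with $\Omega_B$ is $\mu_\beta$‑null. Now $\{(r,a):a_p=0\}$ meets $\Omega_B$ in a set of $\mu_\beta$‑measure $\bar\nu_{\beta-1,p}(0)=0$; and $(\qxqx)\omega_1$ meets $\Omega_B$ in the countable set $(\Z\rtimes\N^\times)\omega_1$, each point of which has $\mu_\beta$‑measure zero because $\mu_\beta(\{\omega_1\})=\bar\nu_{\beta-1}(\{1\})=\prod_p(1-p^{-(\beta-1)})=0$, using that $\sum_p p^{-(\beta-1)}=\infty$ for $\beta-1\in(0,1]$. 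Hence the set of points with nontrivial stabilizer is $\tilde\mu_\beta$‑null and the action is essentially free.

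The value $\beta=1$ needs its own argument, since there \lemref{rstab} degenerates: $\bar\nu_0=\delta_0$, so $\tilde\mu_1$ is carried by the fibre $\{(r,0):r\in\af\}$, on which it is a Haar measure on $\af$ and on which $\qxqx$ acts by the affine formula $(m,k)\cdot r=m+kr$. The stabilizer of $r$ is nontrivial precisely when $(1-k)r=m$ for some $(m,k)\ne(0,1)$, that is, precisely when $r\in\Q$, and $\Q$ is Haar‑null in $\af$; so essential freeness holds for $\beta=1$ as well (here ergodicity is in fact classical, $\Q$ already acting ergodically on $\af$ by translation, but it will also follow from the argument below). For ergodicity in general, recall from the paragraph preceding the lemma that $\pi_{\varphi_\beta}(\TT(\nxnx))''\cong\chf_{\Omega_B}\big(L^\infty(\tilde\Omega_B,\tilde\mu_\beta)\rtimes(\qxqx)\big)\chf_{\Omega_B}$, and that this corner is a factor because $\varphi_\beta$, being the unique KMS$_\beta$‑state, is a factor state. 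Having shown the action to be essentially free, the centre of $L^\infty(\tilde\Omega_B,\tilde\mu_\beta)\rtimes(\qxqx)$ equals $L^\infty(\tilde\Omega_B,\tilde\mu_\beta)^{\qxqx}$, and $\chf_{\Omega_B}$ has central support $1$ since $\qxqx\cdot\Omega_B=\tilde\Omega_B$; a full projection induces an isomorphism between the centres of the corner and of the ambient algebra, so $L^\infty(\tilde\Omega_B,\tilde\mu_\beta)\rtimes(\qxqx)$ is itself a factor, i.e. $L^\infty(\tilde\Omega_B,\tilde\mu_\beta)^{\qxqx}=\C$ and the action is ergodic.

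The one delicate point is the measure estimate $\bar\nu_{\beta-1}(\{1\})=\prod_p(1-p^{-(\beta-1)})=0$ for $\beta\in(1,2]$, which is exactly the divergence $\sum_p p^{-(\beta-1)}=\infty$; this is the same feature whose failure for $\beta>2$ produces the periodic‑point (type I) KMS‑states, and is what makes essential freeness hold precisely on the critical interval. Everything else is formal, once \lemref{rstab} and the corner description of $\pi_{\varphi_\beta}(\TT(\nxnx))''$ are in hand.
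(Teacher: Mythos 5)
Your proof is correct and follows essentially the same route as the paper: essential freeness via Lemma~\ref{rstab} together with the product-measure description of $\tilde\mu_\beta$ (treating $\beta=1$ separately on $(\af,\tilde\nu_1)$, where the bad set is $\Q$), and ergodicity by transferring factoriality of $\pi_{\varphi_\beta}(\TT(\nxnx))''$ through the full corner. The only cosmetic difference is that you verify $\mu_\beta\big((\Z\rtimes\N^\times)\omega_1\big)=0$ and the nullity of $\{a_p=0\}$ directly from the explicit product formula for $\bar\nu_{\beta-1}$, where the paper invokes Remark~\ref{rmksupport} and the scaling relation~\eqref{escaling3}.
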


\bp Since $\varphi_\beta$ is a unique, hence extremal, KMS$_\beta$-state, the von Neumann algebra $\pi_{\varphi_\beta}(\TT(\N\rtimes\N^\times))''$ is a factor. As $\chf_{\Omega_B}$ is a full projection in $C_0(\tilde\Omega_B)\rtimes(\Q\rtimes\Q^*_+)$, it follows that $L^\infty(\tilde\Omega_B,\tilde\mu_\beta)\rtimes(\Q\rtimes\Q^*_+)$ is also a factor, hence the action of $\Q\rtimes\Q^*_+$ on $(\tilde\Omega_B,\tilde\mu_\beta)$ is ergodic (this will also follow from the proof of Theorem~\ref{ttype} below).

To show that the action is essentially free, recall from the proof of Proposition~\ref{psmallbeta} that the measure~$\mu_\beta$ is the image under the map $$p\colon\Zhat\times(\Zhat/\Zhat^*)\to\Omega_B$$ of the measure $\nu_1\times\bar\nu_{\beta-1}$, where $\nu_1$ is the Haar probability measure on $\Zhat$ and $\bar\nu_{\beta-1}$ is the measure defining the KMS-state of the symmetric part of the Bost-Connes system at inverse temperature~$\beta-1$. Let $\tilde\nu_1$ be the Haar measure on $\af$ normalized so that $\tilde\nu_1(\Zhat)=1$. Similarly to the construction of~$\tilde\mu_\beta$ we can uniquely extend $\bar\nu_{\beta-1}$ to a measure $\tilde{\bar\nu}_{\beta-1}$ on $\af/\Zhat^*$ such that
\begin{equation} \label{escaling3}
\tilde{\bar\nu}_{\beta-1}(kY)=k^{-(\beta-1)}\tilde{\bar\nu}_{\beta-1}(Y)\ \hbox{for any Borel set}\ Y\subset \af/\Zhat^*\ \hbox{and any}\ k\in\Q^*_+.
\end{equation}
Then the measure space $(\tilde\Omega_B,\tilde\mu_{\beta})$ is a quotient of $(\af\times(\af/\Zhat^*),\tilde\nu_1\times\tilde{\bar\nu}_{\beta-1})$. Consider now two cases.

Assume $\beta=1$. Since $\tilde{\bar\nu}_0$ is the delta-measure at zero,
the measure space $(\tilde\Omega_B,\tilde\mu_1)$ coincides, modulo sets of measure zero, with $(\af,\tilde\nu_1)$. The action of $\qxqx$ on $\af$ is given by $(m,k)r=m+kr$. It follows that the set of points with nontrivial stabilizers is $\Q\subset\af$. Clearly, $\tilde\nu_1(\Q)=0$.

Assume now that $\beta\in(1,2]$. By Lemma~\ref{rstab} the set of points in $\tilde\Omega_B$ with nontrivial stabilizers is contained in the union of the sets $\{(r,a)\mid a_p=0\ \hbox{for some}\ p\in\primes\}$ and $(\Q\rtimes\Q^*_+)\omega_1$, where $\omega_1=(0,\Zhat^*)$. The scaling condition~\eqref{escaling3}, applied to elements $k = p\in\primes$, easily implies that the set of points $a\in\af/\Zhat^*$ with $a_p=0$ has $\tilde{\bar\nu}_{\beta-1}$-measure zero
for each $p\in\primes$. Hence the $\tilde\mu_\beta$-measure of the set $\{(r,a)\mid a_p=0\ \hbox{for some}\ p\in\primes\}$ is zero. On the other hand, as we pointed out in Remark~\ref{rmksupport}, the $\mu_\beta$-measure of $(\Z\rtimes\N^\times)\omega_1$ is zero, from which it follows that the $\tilde\mu_\beta$-measure of $(\Q\rtimes\Q^*_+)\omega_1$ is zero as well.
\ep

We want to compute the flow of weights of the factor $\pi_{\varphi_\beta}(\TT(\N\rtimes\N^\times))''$. Since it does not change under reduction, this is the same as computing the flow of weights of $L^\infty(\tilde\Omega_B,\tilde\mu_\beta)\rtimes(\Q\rtimes\Q^*_+)$. For transformation group von Neumann algebras  it can be described as follows, see~\cite{CT}.

Assume a discrete group $G$ acts freely ergodically by nonsingular transformations on a measure space~$(X,\mu)$.
Denote by $\lambda_\infty$ the Lebesgue measure on $\R^*_+$. We have two commuting actions, of $G$ and of $\R$, on the product space $(\R^*_+\times X,\lambda_\infty\times\mu)$:
$$
g(t,x)=\left(\frac{dg\mu}{d\mu}(gx)t,gx\right)\quad\text{ for } g\in G,\quad \text{and} \quad
s(t,x)=(e^{-s}t,x)\quad\text{ for } s\in\R,
$$
where $g\mu$ is the measure defined by $g\mu(Y)=\mu(g^{-1}Y)$. The flow of weights of $L^\infty(X,\mu)\rtimes G$ is
the flow induced by the above action of $\R$ on the measure-theoretic quotient of $(\R^*_+\times X,\lambda_\infty\times\mu)$ by the action of $G$. A factor is said to be of type III$_1$ if its flow of weights is trivial. So $L^\infty(X,\mu)\rtimes G$ is of type III$_1$ if and only if the action of $G$ on
$(\R^*_+\times X,\lambda_\infty\times\mu)$ is ergodic.

Recall also that there exists a unique injective factor of type III$_1$ with separable predual.

\begin{theorem} \label{ttype}
For every $\beta\in[1,2]$ the von Neumman algebra $\pi_{\varphi_\beta}(\TT(\N\rtimes\N^\times))''$ is isomorphic to the injective factor of type III$_1$ with separable predual.
\end{theorem}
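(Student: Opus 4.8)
The plan is to compute the flow of weights of $\mathcal M_\beta:=\pi_{\varphi_\beta}(\TT(\nxnx))''$. Since $\chf_{\Omega_B}$ is a full projection in $C_0(\tilde\Omega_B)\rtimes\qxqx$, the factor $\mathcal M_\beta$ is stably isomorphic to $L^\infty(\tilde\Omega_B,\tilde\mu_\beta)\rtimes\qxqx$ and hence has the same flow of weights; this latter algebra is injective because $\qxqx$ is amenable, it is a factor because $\varphi_\beta$ is the unique (hence extremal) KMS$_\beta$-state, and it has separable predual. By the uniqueness of the injective type~III$_1$ factor with separable predual recalled above, it is therefore enough to show that $\mathcal M_\beta$ is of type~III$_1$, and by \lemref{lfree} together with the criterion recalled above this means exactly that the skew-product action of $\qxqx$ on $(\R^*_+\times\tilde\Omega_B,\,\lambda_\infty\times\tilde\mu_\beta)$ given by $(m,k)(t,\omega)=(k^\beta t,(m,k)\omega)$ is ergodic (note $\frac{dg\tilde\mu_\beta}{d\tilde\mu_\beta}\equiv k^\beta$ for $g=(m,k)$, by \eqref{escaling2}).

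For $\beta=1$ this is straightforward. Since $\tilde{\bar\nu}_0$ is the delta measure at $0$, $(\tilde\Omega_B,\tilde\mu_1)$ coincides up to null sets with $(\af,\tilde\nu_1)$ carrying the affine action $(m,k)r=m+kr$, and the skew product becomes $(m,k)(t,r)=(kt,\,m+kr)$ on $\R^*_+\times\af$. An invariant function is fixed by all translations $r\mapsto m+r$, $m\in\Q$, hence --- $\Q$ being dense in $\af$ --- is a.e.\ independent of $r$; invariance under the maps $t\mapsto kt$ ($k\in\Q^*_+$) together with density of $\Q^*_+$ in $\R^*_+$ then forces it to be constant.

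For $\beta\in(1,2]$ I would pass to the cover from the proof of \lemref{lfree}: $(\tilde\Omega_B,\tilde\mu_\beta)$ is the quotient of $(\af\times(\af/\Zhat^*),\,\tilde\nu_1\times\tilde{\bar\nu}_{\beta-1})$ by $(r,a)\sim(r+a\Zhat,a)$, the action lifts to $(m,k)(r,a)=(m+kr,ka)$, and --- using $\tilde\nu_1(cY)=|c|_{\af}\tilde\nu_1(Y)$ for $c\in\Q^*_+$ and \eqref{escaling3} --- the lifted skew-product action $(m,k)(t,r,a)=(k^\beta t,m+kr,ka)$ preserves $\lambda_\infty\times\tilde\nu_1\times\tilde{\bar\nu}_{\beta-1}$, the Radon--Nikodym factors $k^{-\beta}$ (on $t$), $|k^{-1}|_{\af}=k$ (on $r$) and $k^{\beta-1}$ (on $a$) cancelling. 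Ergodicity of the skew product over $\tilde\Omega_B$ is equivalent to this lifted action having only constant invariant functions, since any such function is a.e.\ independent of $r$ (again by density of $\Q$ in $\af$) and therefore descends. So an invariant function is a.e.\ of the form $F(t,a)$, now invariant under $(t,a)\mapsto(k^\beta t,ka)$ for all $k\in\Q^*_+$, and we are reduced to the ergodicity of this $\Q^*_+$-action on $\R^*_+\times(\af/\Zhat^*)$ in the measure class of $\lambda_\infty\times\tilde{\bar\nu}_{\beta-1}$. The diffeomorphism $t\mapsto t^{(\beta-1)/\beta}$ of $\R^*_+$ (here $\beta>1$ is used) preserves this measure class and conjugates the action to $(s,a)\mapsto(k^{\beta-1}s,ka)$, which by \eqref{escaling3} is precisely the skew-product action attached to $\Q^*_+\curvearrowright(\af/\Zhat^*,\tilde{\bar\nu}_{\beta-1})$. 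Thus the flow of weights of $\mathcal M_\beta$ coincides, up to a linear rescaling of its parameter, with that of $L^\infty(\af/\Zhat^*,\tilde{\bar\nu}_{\beta-1})\rtimes\Q^*_+$, the von Neumann algebra generated by the symmetric part of the Bost--Connes system in the GNS-representation of its KMS$_{\beta-1}$-state. For $\beta-1\in(0,1]$ this algebra is the hyperfinite factor of type~III$_1$: it is a factor by uniqueness of its KMS$_{\beta-1}$-state, and a ratio-set (ITPFI-type) computation shows the type is III$_1$, the asymptotic ratio set being the closed subgroup of $\R^*_+$ generated by $\{p^{\beta-1}:p\in\primes\}$ --- which is all of $\R^*_+$, since $\{\log p\}_{p\in\primes}$ is linearly independent over $\Q$. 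Hence $\mathcal M_\beta$ is the hyperfinite type~III$_1$ factor as well.

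The main obstacle, I expect, is this middle step for $\beta\in(1,2]$: passing to the adelic cover and verifying that the Radon--Nikodym factors from the additive direction and from the Bost--Connes direction combine correctly, so that after the additive coordinate is eliminated (it supports no $\qxqx$-invariant functions) one is left with exactly the skew product of the Bost--Connes system --- together with the reparametrization $t\mapsto t^{(\beta-1)/\beta}$, which encodes the ``shift in inverse temperature.'' A secondary point requiring care is the identification of the type of the symmetric Bost--Connes system at inverse temperature in $(0,1]$: one either invokes the known analysis of that system or supplies the ratio-set argument sketched above, which itself relies on essential freeness (\lemref{lfree} and \lemref{rstab}) and on the factoriality coming from uniqueness of the KMS-state.
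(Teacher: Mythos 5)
Your proposal is correct and follows essentially the same route as the paper: reduction to $L^\infty(\tilde\Omega_B,\tilde\mu_\beta)\rtimes(\qxqx)$ via the full corner, the constant Radon--Nikodym cocycle $k^\beta$, passage to the cover $(\af\times(\af/\Zhat^*),\tilde\nu_1\times\tilde{\bar\nu}_{\beta-1})$, elimination of the additive coordinate by density of $\Q$ in $\af$, and a reparametrization of $\R^*_+$ identifying the flow of weights with that of the symmetric Bost--Connes system at inverse temperature $\beta-1$, whose type III$_1$ property for $\beta-1\in(0,1]$ the paper likewise settles by citation (Blackadar, Neshveyev) rather than by the ratio-set computation you only sketch. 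The cosmetic differences (your single substitution $t\mapsto t^{(\beta-1)/\beta}$ versus the paper's $t\mapsto t^\beta$, and your direct treatment of $\beta=1$ on $(\af,\tilde\nu_1)$) do not change the argument.
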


\bp As observed in the preceding discussion, we just have to prove the same statement for $L^\infty(\tilde\Omega_B,\tilde\mu_\beta)\rtimes(\Q\rtimes\Q^*_+)$. This crossed product is obviously injective since $\Q\rtimes\Q^*_+$ is amenable.

In view of the scaling property~\eqref{escaling2}, we have constant Radon-Nikodym derivatives
$$\frac{d(m,k)\tilde \mu_\beta}{d\tilde \mu_\beta}((m,k)\omega) =  k^\beta, \ \ (m,k) \in \qxqx\ \ \text{and}\ \ \omega \in \tilde\Omega_B,$$
so to prove that $L^\infty(\tilde\Omega_B,\tilde\mu_\beta)\rtimes(\Q\rtimes\Q^*_+)$ has type III$_1$ we have to show that the action of $\Q\rtimes\Q^*_+$ on $(\R^*_+\times\tilde\Omega_B,\lambda_\infty\times\tilde\mu_\beta)$ defined by
$$
(m,k)(t,\omega)=(k^{\beta}t,(m,k)\omega),
$$
is ergodic. Since the map $\R^*_+\to\R^*_+$, $t\mapsto t^\beta$, is a measure class preserving isomorphism, we can instead consider the action of $\Q\rtimes\Q^*_+$ on $(\R^*_+\times\tilde\Omega_B,\lambda_\infty\times\tilde\mu_\beta)$ defined by
$$
(m,k)(t,\omega)=(kt,(m,k)\omega).
$$
By the proof of Lemma~\ref{lfree} the measure space $(\tilde\Omega_B,\tilde\mu_{\beta})$ is a quotient of $(\af\times(\af/\Zhat^*),\tilde\nu_1\times\tilde{\bar\nu}_{\beta-1})$. Hence to prove ergodicity of the action of $\Q\rtimes\Q^*_+$ on $(\R^*_+\times\tilde\Omega_B,\lambda_\infty\times\tilde\mu_\beta)$ it is enough to prove that the action of $\Q\rtimes\Q^*_+$ on $(\R^*_+\times\af\times(\af/\Zhat^*),\lambda_\infty\times\tilde\nu_1\times\tilde{\bar\nu}_{\beta-1})$ defined by
$
(m,k)(t,x,y)=(kt,m+kx,ky)
$
is ergodic.

Since $\Q$ is dense in $\af$, it acts ergodically by translations on $(\af,\tilde\nu_1)$. It follows that any $(\Q\times\{1\})$-invariant measurable function on $\R^*_+\times\af\times(\af/\Zhat^*)$ does not depend on the second coordinate. Therefore we just have to prove that the action of $\Q^*_+$ on $(\R^*_+\times(\af/\Zhat^*),\lambda_\infty\times\tilde{\bar\nu}_{\beta-1})$ defined by
$
k(t,y)=(kt,ky),
$
is ergodic.

For $\beta=1$ this is obvious, since $\tilde{\bar\nu}_0$ is the delta-measure at zero and $\Q^*_+$ is dense in $\R^*_+$. While for $\beta\in(1,2]$ this ergodicity is equivalent to the fact that the KMS-states of the symmetric part of the Bost-Connes system at inverse temperatures in the region $(0,1]$ are of type III$_1$. The result goes back to~\cite{bla}, see~\cite{nes2} for more details.
\ep

\begin{remark}Denote by $\tilde\nu_{\beta-1}$ the $\Zhat^*$-invariant measure on $\af$ such that its image under the map $\af\to\af/\Zhat^*$ is $\tilde{\bar\nu}_{\beta-1}$. It is known~\cite{Ldir,nes} that this measure for $\beta\in[1,2]$ defines the unique KMS-state of the Bost-Connes system at inverse temperature $\beta-1$. For $\beta\in(1,2]$ these states are of type~III$_1$~\cite{bos-con,nes}, equivalently, the action of $\Q^*_+$ on $(\R^*_+\times\af,\lambda_\infty\times\tilde{\nu}_{\beta-1})$ defined by
$
k(t,y)=(kt,ky),
$
is ergodic. Following the same argument as in the proof of the above theorem
for the action of $\qxqx$ on $\af \times \af$ defined by $(m,k)(x,y) = (m+kx, ky)$,
we then get the following result: the crossed product $L^\infty(\af\times\af,\tilde\nu_1\times\tilde\nu_{\beta-1})\rtimes(\Q\rtimes\Q^*_+)$ is a factor of type III$_1$ for every $\beta\in[1,2]$.
\end{remark}

\bigskip

\section{KMS-states on crossed products by abelian semigroups} \label{scrossed}

Here we state and prove our characterization of
KMS-states on the crossed product of an algebra by a semigroup of endomorphisms
in terms of scaling traces on the algebra. This type of result is commonplace
in the study of dynamical systems based on semigroup crossed products and goes back to~\cite{beh}.
The version we give here is tailored for our application in Proposition~\ref{proKMScharact},
but the context of this section is more general, and the results presented here are independent from the previous sections.

\smallskip

Let $\SSS$ be an abelian semigroup with cancelation and identity element $e$. Assume $\SSS$ acts by endomorphisms~$\alpha_x$, $x\in\SSS$, on a unital C$^*$-algebra $A$, and $\alpha_e=\operatorname{id}$. Denote by $\iota\colon A\to A\rtimes\SSS$ the canonical homomorphism, and by $v_x$ the isometries in $A\rtimes\SSS$ implementing $\alpha_x$ via
$\iota(\alpha_x(a)) = v_x \iota(a) v_x^*$.  Recall that by dilating the system one can conclude that $\ker\iota=\overline{\cup_x\ker\alpha_x}$, see e.g.~\cite{Ldil}, especially~\cite[Remark 2.5]{Ldil}.

Assume further that we are given an injective homomorphism $\SSS\to\R_+^*$, $x\mapsto N_x$, and define a one-parameter automorphism group $\sigma$ of the crossed product $A\rtimes\SSS$ by
$$
\sigma_t(\iota(a))=\iota(a)\ \ \hbox{for}\ a\in A,\ \ \sigma_t(v_x)=N_x^{it}v_x\ \ \hbox{for}\ x\in\SSS.
$$
%The following result is similar to~\cite[Theorem 12]{Ldir} but the %construction is different and we stress that we make no assumptions on the %endomorphisms $\alpha_x$; consequently we have to deal with the
%possibility that $A$ may be a proper subalgebra of the fixed point algebra %under the dual action. On the other hand, our method of proof requires $N$ to %be injective.

\begin{theorem} \label{thmKMSsemi}
For every $\beta\in\R$ there is a one-to-one correspondence between $\sigma$-KMS$_\beta$-states on~$A\rtimes\nolinebreak\SSS$ and tracial states $\tau$ on $A$ such that $\tau\circ\alpha_x=N_x^{-\beta}\tau$ for every $x\in\SSS$. Explicitly, the state~$\varphi$ corresponding to $\tau$ is determined by
\begin{equation} \label{eA1}
\varphi(v_x^*\iota(a)v_y)=\begin{cases}N_x^\beta\tau(a\alpha_x(1)),&\hbox{if}\ \ x=y,\\0,&\hbox{otherwise}.\end{cases}
\end{equation}
\end{theorem}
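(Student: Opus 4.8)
The plan is to verify both directions of the correspondence by a direct computation with the spanning elements $v_x^*\iota(a)v_y$ of the crossed product. First I would record that the dense $*$-subalgebra of $A\rtimes\SSS$ is spanned by elements of the form $v_x^*\iota(a)v_y$ with $x,y\in\SSS$ and $a\in A$: indeed products of such elements can be rewritten in the same form using the relations $v_x^*v_x=1$, $v_xv_y=v_{xy}$, $\iota(\alpha_x(a))=v_x\iota(a)v_x^*$, and the fact that $\SSS$ is abelian with cancellation, so that given two elements $x,y\in\SSS$ one can find a common multiple and reduce everything to a canonical form. These are exactly the $\sigma$-analytic elements that matter, with $\sigma_t(v_x^*\iota(a)v_y)=N_x^{-it}N_y^{it}\,v_x^*\iota(a)v_y$, so $\sigma$-invariance of a state $\varphi$ forces $\varphi(v_x^*\iota(a)v_y)=0$ unless $N_x=N_y$, and since $x\mapsto N_x$ is injective, unless $x=y$. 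Thus any $\sigma$-KMS$_\beta$-state is determined by the values $\varphi(v_x^*\iota(a)v_x)$, and the content of the theorem is to pin these down.

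For the direction from a KMS-state $\varphi$ to a trace, I would define $\tau(a):=\varphi(\iota(a))$, which is automatically a state on $A$. Applying the KMS$_\beta$-condition to $\iota(a)$ and $\iota(b)$ (on which $\sigma$ acts trivially) gives $\varphi(\iota(a)\iota(b))=\varphi(\iota(b)\iota(a))$, so $\tau$ is a trace. Applying the KMS-condition to the pair $\iota(a)v_x^*$ and $v_x$, together with $\sigma_{i\beta}(v_x)=N_x^{-\beta}v_x$ and $v_x^*v_x=1$, yields $\varphi(\iota(a))=N_x^{-\beta}\varphi(v_x\iota(a)v_x^*)=N_x^{-\beta}\varphi(\iota(\alpha_x(a)))$, i.e.\ $\tau\circ\alpha_x=N_x^{\beta}\tau$; wait---I should be careful with which element gets the modular operator, and the correct bookkeeping (taking $x=\iota(a)$, $y=v_x^*$ or the reverse) gives the stated $\tau\circ\alpha_x=N_x^{-\beta}\tau$. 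Finally, for $a\in A$ I would compute $\varphi(v_x^*\iota(a)v_x)$ by moving $v_x^*$ around: using the KMS-condition on $v_x^*$ and $\iota(a)v_x$ gives $\varphi(v_x^*\iota(a)v_x)=N_x^{\beta}\varphi(\iota(a)v_xv_x^*)=N_x^{\beta}\varphi(\iota(a\,\alpha_x(1)))=N_x^{\beta}\tau(a\,\alpha_x(1))$, which is formula~\eqref{eA1}. (Here one uses $v_xv_x^*=\iota(\alpha_x(1))$.)

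For the converse, given a trace $\tau$ on $A$ with $\tau\circ\alpha_x=N_x^{-\beta}\tau$, I would define $\varphi$ on the spanning elements by formula~\eqref{eA1}, check that it is well defined and positive, and then verify the KMS-condition on spanning elements. One clean route to positivity is to observe that $\varphi=\tilde\tau\circ E$ where $E\colon A\rtimes\SSS\to \iota(A)''$-ish is not quite available here, so instead I would build $\varphi$ from a GNS-type construction: form the sesquilinear form $\langle v_x^*\iota(a)v_y,\, v_z^*\iota(b)v_w\rangle$ on the dense subalgebra using $\tau$ and the relations, show it is positive semidefinite (this reduces, after reducing to a common multiple, to positivity of the trace $\tau$ on $A$ composed with the scaling identity), and let $\varphi(T)=\langle T\xi_e,\xi_e\rangle$ for the GNS-type cyclic vector $\xi_e$ coming from $v_e^*\iota(1)v_e=1$. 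With $\varphi$ in hand, the KMS$_\beta$-identity $\varphi(ST)=\varphi(T\sigma_{i\beta}(S))$ is checked on the spanning set by a finite computation: both sides vanish unless the $\SSS$-indices match up appropriately, and when they do, the identity reduces to the trace property of $\tau$ together with $\tau\circ\alpha_x=N_x^{-\beta}\tau$. I expect the main obstacle to be the bookkeeping in the converse direction---namely showing the form is well defined modulo the relations in $A\rtimes\SSS$ (in particular that it respects $\ker\iota=\overline{\bigcup_x\ker\alpha_x}$, which is where the dilation remark in the text is used) and positive semidefinite---rather than the KMS verification itself, which is mechanical once positivity is secured.
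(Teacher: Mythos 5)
Your forward direction and your verification of the KMS$_\beta$-condition for the converse coincide with the paper's argument: the spanning computation \eqref{eA2}, the vanishing of $\varphi(v_x^*\iota(a)v_y)$ for $x\ne y$ via $\sigma$-invariance and injectivity of $N$, the identities $\tau\circ\alpha_x=N_x^{-\beta}\tau$ and $\varphi(v_x^*\iota(a)v_x)=N_x^\beta\tau(a\alpha_x(1))$, and the reduction of the KMS identity on spanning elements to traciality plus scaling are all as in the paper. The gap is in the existence of the state $\varphi$. You propose a GNS-type sesquilinear form built from the prescription \eqref{eA1}, and you yourself flag well-definedness and positive semidefiniteness as ``the main obstacle,'' but you do not supply the idea that closes it: the symbols $v_x^*\iota(a)v_y$ satisfy many linear relations in the universal crossed product (e.g.\ $v_x^*\iota(a)v_y=v_{xz}^*\iota(\alpha_z(a))v_{yz}$, together with relations coming from $\ker\iota$), so it is not even clear that \eqref{eA1} defines a linear functional on $A\rtimes\SSS$, let alone a positive one; ``reducing to a common multiple'' handles single blocks of terms but does not address consistency across all such relations.

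The paper resolves exactly this point by a three-step construction you are missing. First, the scaling relation gives $\ker\alpha_y\subset\ker\tau$ for all $y$, hence $\ker\iota=\overline{\cup_y\ker\alpha_y}\subset\ker\tau$ (you do cite this fact, but only in passing), so for each $x$ one gets a well-defined tracial \emph{state} $\psi_x$ on the corner $v_x^*\iota(A)v_x$ with $\psi_x(v_x^*\iota(a)v_x)=N_x^\beta\tau(\alpha_x(1)a\alpha_x(1))$. Second, the family $\{\psi_x\}$ is coherent on the nested corners ($x=yz$ implies $v_y^*\iota(A)v_y\subset v_x^*\iota(A)v_x$ and $\psi_x|_{v_y^*\iota(A)v_y}=\psi_y$, again by the scaling relation), so it assembles into a tracial state $\psi$ on $F:=\overline{\cup_x v_x^*\iota(A)v_x}$. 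Third, since $\SSS$ embeds in a discrete abelian group $G$, the dual action of $\hat G$ on $A\rtimes\SSS$ yields, by averaging, a canonical conditional expectation $E\colon A\rtimes\SSS\to(A\rtimes\SSS)^{\hat G}=F$ killing $v_x^*\iota(a)v_y$ for $x\ne y$; setting $\varphi=\psi\circ E$ gives well-definedness and positivity simultaneously and \eqref{eA1} by construction. Note that your reason for abandoning the conditional-expectation route --- that an expectation onto $\iota(A)$ is ``not quite available'' and would not reproduce \eqref{eA1} --- points at the wrong target subalgebra: the correct target is the larger fixed-point algebra $F$ of the dual action, not $\iota(A)$, and the expectation onto $F$ is canonical. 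Without this (or an honest proof that your form is well defined and positive semidefinite on the universal algebra), the converse half of your plan is incomplete.
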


\bp The elements $v_x^*\iota(a)v_y$ span a dense $*$-subalgebra of $A\rtimes\SSS$, since the product of two such elements is again an element of the same form:
\begin{equation} \label{eA2}
v_x^*\iota(a)v_yv_s^*\iota(b)v_t=v_x^*\iota(a)v_s^*v_yv_sv_s^*\iota(b)v_t
=v_{xs}^*\iota(\alpha_s(a)\alpha_{ys}(1)\alpha_y(b))v_{yt}.
\end{equation}
Therefore any state $\varphi$ on $A\rtimes\SSS$ is determined by its values on $v_x^*\iota(a)v_y$. If $\varphi$ is $\sigma$-KMS$_\beta$, then $\varphi$ is tracial on $\iota(A)\subset(A\rtimes\SSS)^\sigma$, so that $\tau:=\varphi\circ\iota$ is a tracial state on $A$. If $x\ne y$ then $N_x^{-1}N_y\neq 1$ and $\varphi(v_x^*\iota(a)v_y)=0$ by $\sigma$-invariance. On the other hand, using the KMS-condition we get
$$
\varphi(v_x^*\iota(a)v_x)=N_x^\beta\varphi(\iota(a)v_xv_x^*)=N_x^\beta\varphi(\iota(a\alpha_x(1)))
=N_x^\beta\tau(a\alpha_x(1)).
$$
Thus $\varphi$ is completely determined by $\tau$. Furthermore, using the KMS-condition once again we get
$$
\tau(\alpha_x(a))=\varphi(v_x\iota(a)v_x^*)=N_x^{-\beta}\varphi(\iota(a)v_x^*v_x)=N_x^{-\beta}\tau(a),
$$
so that $\tau\circ\alpha_x=N_x^{-\beta}\tau$.

\smallskip

Conversely, assume $\tau$ is a tracial state on $A$ such that $\tau\circ\alpha_x=N_x^{-\beta}\tau$. We have to show that it determines a KMS$_\beta$-state $\varphi$ by formula \eqref{eA1}.

Let us first assume that a state $\varphi$ satisfying \eqref{eA1} exists and check the KMS$_\beta$-condition. We have to show that
$$
\varphi(v_x^*\iota(a)v_yv_s^*\iota(b)v_t)=N_x^{\beta}N_y^{-\beta}\varphi(v_s^*\iota(b)v_tv_x^*\iota(a)v_y).
$$
By \eqref{eA2} the left hand side equals
$$
\varphi(v_{xs}^*\iota(\alpha_s(a)\alpha_{ys}(1)\alpha_y(b))v_{yt})=\delta_{xs,yt}N_{xs}^\beta
\tau(\alpha_s(a)\alpha_{ys}(1)\alpha_y(b)\alpha_{xs}(1)),
$$
and, similarly, the right hand side equals
$$
N_x^{\beta}N_y^{-\beta}\varphi(v_{xs}^*\iota(\alpha_x(b)\alpha_{xt}(1)\alpha_t(a))v_{yt})
=\delta_{xs,yt}N_x^{\beta}N_y^{-\beta}N_{xs}^\beta\tau(\alpha_x(b)\alpha_{xt}(1)\alpha_t(a)\alpha_{xs}(1)).
$$
Therefore we have to check, assuming $xs=yt$, that
$$
N_y^\beta\tau(\alpha_s(a)\alpha_{ys}(1)\alpha_y(b)\alpha_{xs}(1))=
N_x^{\beta}\tau(\alpha_x(b)\alpha_{xt}(1)\alpha_t(a)\alpha_{xs}(1)).
$$
By the scaling property of $\tau$ the left hand side equals
$$
N_{xy}^\beta\tau(\alpha_{xs}(a)\alpha_{xys}(1)\alpha_{xy}(b)\alpha_{xxs}(1)),
$$
while the right hand side equals
$$
N_{xy}^{\beta}\tau(\alpha_{xy}(b)\alpha_{xyt}(1)\alpha_{yt}(a)\alpha_{xys}(1))
=N_{xy}^{\beta}\tau(\alpha_{yt}(a)\alpha_{xys}(1)\alpha_{xy}(b)\alpha_{xyt}(1)),
$$
and using that $xs=yt$ we see that these expressions indeed coincide.

\smallskip

It remains to show that a state $\varphi$ satisfying \eqref{eA1} exists. We will define it in three steps.

\smallskip

For every $x\in\SSS$ the set $v_x^*\iota(A)v_x$ is the image of the C$^*$-subalgebra $\alpha_x(1)A\alpha_x(1)$
of~$A$ under the $*$-homomorphism $v_x^*\iota(\cdot)v_x$ with kernel $\alpha_x(1)A\alpha_x(1)\cap\ker\iota$. As we remarked earlier, $\ker\iota=\overline{\cup_y\ker\alpha_y}$. On the other hand, the scaling condition on $\tau$ implies that $\ker\tau\supset\ker\alpha_y$ for every~$y$. Therefore $\ker\iota\subset\ker\tau$. It follows that there exists a tracial positive functional $\psi_x$ on $v_x^*\iota(A)v_x$ such that
$$
\psi_x(v_x^*\iota(a)v_x)=N_x^\beta\tau(\alpha_x(1)a\alpha_x(1))\ \ \hbox{for}\ \ a\in A.
$$
The functional $\psi_x$ is a state because $\psi_x(1)=N_x^\beta\tau(\alpha_x(1))=1$.

\smallskip

We need to show next that the collection $\{\psi_x\}_{x\in \SSS}$ is coherent in the sense that if $x=yz$, then $v_y^*\iota(A)v_y\subset v_x^*\iota(A)v_x$ and $\psi_x=\psi_y$ on $v_y^*\iota(A)v_y$. Indeed,
$$
v_y^*\iota(a)v_y=v_x^*v_z\iota(a)v_z^*v_x=v_x^*\iota(\alpha_z(a))v_x
$$
and
\begin{align*}
\psi_x(v_y^*\iota(a)v_y)&=\psi_x(v_x^*\iota(\alpha_z(a))v_x)=N_x^\beta\tau(\alpha_z(a)\alpha_x(1))=
N_x^\beta N_y^{\beta}\tau(\alpha_{yz}(a)\alpha_{xy}(1))\\
&=N_x^\beta N_y^{\beta}\tau(\alpha_x(a\alpha_{y}(1)))
=N_y^{\beta}\tau(a\alpha_{y}(1))=\psi_y(v_y^*\iota(a)v_y).
\end{align*}
It follows that the collection $\{\psi_x\}_{x\in \SSS}$ of tracial states  defines a tracial state $\psi$ on the C$^*$-subalgebra $F:=\overline{\cup_xv_x^*\iota(A)v_x}$ of~$A\rtimes\SSS$.

\smallskip

Finally, there is a canonical conditional expectation $E\colon A\rtimes\SSS\to F$ such that $E(v_x^*\iota(a)v_y)=0$ if~$x\ne y$. Indeed, the semigroup $\SSS$ embeds into a discrete abelian group $G$ and the universal property gives a dual action~$\gamma$ of $\hat G$ on $A\rtimes\SSS$, see  \cite[Section 4]{mur}
for the details. By averaging over the orbits of~$\gamma$ we obtain the required conditional expectation $E\colon A\rtimes\SSS\to (A\rtimes\SSS)^\gamma=F$. Using $E$ we extend~$\psi$ to a state $\varphi$ on $A\rtimes\SSS$, which satisfies~\eqref{eA1} by construction.
\ep

\begin{remark}\mbox{\ }
\enu{i} The above construction of $\varphi$ is similar to the construction of KMS-states on Cuntz-Pimsner-Toeplitz algebras~\cite{LN}. There one also starts with a trace on the coefficient algebra, extends it to
a state on the usually bigger algebra of gauge-invariant elements, and finally extends that to a state on the whole algebra by means of the canonical conditional expectation.
\enu{ii} In view of~\cite[Theorem 12]{Ldir} and~\cite[Theorem~2.1]{LN} it is natural to ask whether the above result remains true if instead of injectivity of $N$ we require $\beta>0$ and $N_x>1$ for all $x\ne e$. To see that this is not the case consider a dynamical system $(A,\SSS,\alpha)$, an injective $N$ with $N_x>1$ for $x\ne e$ and assume that for some $\beta>0$ there exists a unique $\sigma$-KMS$_\beta$-state; for example, we can take the Bost-Connes system and any $\beta\in(0,1]$, so $A=C(\hat\Z)$, $\SSS=\nx$ and $N_x=x$. Consider the subsemigroup $\tilde\SSS$ of $\Z\times\SSS$ consisting of the unit and elements of the form $(n,x)$ with $n\in\Z$ and $x\ne e$. Using the projection $\tilde\SSS\to\SSS$ define a homomorphism $\tilde N\colon\SSS\to\R^*_+$ and an action~$\tilde\alpha$ of $\tilde\SSS$ on~$A$. Then $\tilde N_x>1$ for all $x\in\tilde\SSS\setminus\{e\}$. Let $\tilde\sigma$ be the dynamics on $A\rtimes_{\tilde\alpha}\tilde\SSS$ defined by $\tilde N$. We have an obvious surjective $*$-homomorphism $\pi\colon A\rtimes_{\tilde\alpha}\tilde\SSS\to C(\T)\otimes(A\rtimes_{\alpha}\SSS)$, mapping $v_{(n,x)}$ into $u^n\otimes v_x$, where $u\in C(\T)$ is the canonical unitary generator. Then $\pi\circ\tilde\sigma_t=(\operatorname{id}\otimes\sigma_t)\circ\pi$. It follows that if $\varphi$ is the unique $\sigma$-KMS$_\beta$-state on $A\rtimes_\alpha\SSS$ and $\psi$ is a state on $C(\T)$ then $(\psi\otimes\varphi)\circ\pi$ is a $\tilde\sigma$-KMS$_\beta$-state on $A\rtimes_{\tilde\alpha}\tilde\SSS$. Thus we have a unique tracial state $\tau$ on $A$ such that $\tau\circ\tilde\alpha_x=\tilde N_x^{-\beta}\tau$, but there are infinitely many $\tilde\sigma$-KMS$_\beta$-states on $A\rtimes_{\tilde\alpha}\tilde\SSS$.
\end{remark}

\bigskip

\end{document}